\newcommand{\cal}[1]{\mathcal{#1}}
\theoremstyle{plain}
\newtheorem{theo}{Theorem}
\newtheorem*{question}{Question}
\newtheorem{lemma}{Lemma}[section]
\newtheorem{theorem}[lemma]{Theorem}
\newtheorem{proposition}[lemma]{Proposition}
\newtheorem{corollary}[lemma]{Corollary}
\theoremstyle{definition}
\newtheorem{definition}[lemma]{Definition}
\newtheorem{remark}[lemma]{Remark}
\newtheorem{example}[lemma]{Example}
\let\egthree=\phi
\let\phi=\varphi
\let\varphi=\egthree
\begin{document}
\title{On the cohomology of strata of abelian differentials}
\author{Ursula Hamenst\"adt}
\thanks{Partially supported by the Hausdorff Center Bonn\\
AMS subject classification:57R22, 57R20, 30F30, 14h10}
\date{November 11, 2020}

%\maketitle

\begin{abstract}
For $g\geq 3$, we study the cohomology classes in 
the closure of a stratum of abelian differentials defined by
the boundary strata of codimension one. 
As an application, we find an explicit stratification 
of the spin moduli space for an odd spin structure consisting
of $g-1$ strata ${\cal D}_j$ of codimension $j-1$ such that
${\cal D}_j$ does not contain a complete variety for all $j$.
We also recover some results of Korotkin and Zograf and 
of Chen using a unified 
topological argument. 
% and we identify
%a divisor ${\cal D}$ in ${\cal M}_g$ and a divisor 
%${\cal D}_2$ in ${\cal D}$ with the property that
%${\cal M}_g-{\cal D}$ is affine
%and that ${\cal D}-{\cal D}_2$ does not contain a complete curve. 
\end{abstract}

\maketitle

\section{Introduction}

For $g\geq 3$ the 
 \emph{moduli space} ${\cal M}_g$ 
of complex curves of genus $g$ is a complex orbifold. More precisely, 
it is the quotient of a bounded domain in $\mathbb{C}^{3g-3}$, the so-called 
\emph{Teichm\"uller space} ${\cal T}_g$ of genus $g$, 
under the action of a discrete group of biholomorphic automorphisms, 
the \emph{mapping class group} ${\rm Mod}(S_g)$. 
%The moduli space ${\cal M}_g$ admits a compactification 
%$\overline{\cal M}_g$, the so-called \emph{Deligne Mumford compactification}, 
% which is projective variety (more precisely,
%it is a smooth stack). 
The following question can be found in \cite{FL08}, see also \cite{HL98} for a motivation.

\begin{question} 
Does ${\cal M}_g$ admit a stratification with all strata affine subvarieties
of codimension $\leq g-1$?
\end{question} 

%in particular, it does not contain any complete curve. 
The moduli space admits a compactification $\overline{\cal M}_g$, the 
so-called \emph{Deligne Mumford compactification}, which equips 
${\cal M}_g$ with the structure of a quasi-projective variety. 
The complement of 
an irreducible effective ample divisor in 
$\overline{\cal M}_g$ is affine. This was used by
Fontanari and Looijenga to show that the complement of the 
\emph{Thetanull} divisor
in ${\cal M}_g$ parameterizing curves with an effective even theta characteristic 
is affine for every $g\geq 4$ (Proposition 2.1 of \cite{FL08}). They also show
that the answer to the question is yes for all $g\leq 5$. 
Another approach towards an answer to this question which is 
closer to our viewpoint is due to Chen \cite{Ch19}.

The main goal of this article is to give some additional evidence that the answer
to the above question is affirmative. To this end 
consider the \emph{Hodge bundle} over ${\cal M}_g$ whose
fiber over a complex curve $X$ is just the $g$-dimensional
vector space of holomorphic one-forms on $X$.
The projectivization $P:{\cal P}\to {\cal M}_g$ 
of the Hodge bundle if a holomorphic fiber bundle over ${\cal M}_g$ in the orbifold
sense. It 
admits a natural stratification whose strata consist of projective differentials with the 
same number and multiplicities of zeros.
These strata need not be connected,
but the number of connected components is at most 3 \cite{KtZ03}.

The \emph{tautological ring} of ${\cal M}_g$ is the subring of the rational
cohomology ring of ${\cal M}_g$ generated by the 
\emph{Mumford Morita Miller} classes $\kappa_k\in H^{2k}({\cal M}_g,\mathbb{Q})$
(see \cite{M87} and \cite{Lo95} for a comprehensive discussion of these classes).  
Denote by $\eta$ the 
Chern class of the tautological
line bundle over the fibers of ${\cal P}$.
We use the zeros of the differentials in a component ${\cal Q}$ of 
a stratum to analyze the cohomology classes on the closure
$\overline{\cal Q}$ of ${\cal Q}$ 
defined by the boundary components
of ${\cal Q}$ of codimension one. 
We find that these classes are all contained in the subspace of 
$H^*(\overline{\cal Q},\mathbb{Q})$ spanned by the restrictions to 
$\overline{\cal Q}$ of the pull-back
$P^*\kappa_1$ and the class 
$\eta$.

%We show that if
%${\cal Q}$ consists of differentials with at least two zeros, then
%this ring is generated by the class $\eta$ together with 
%the Chern classes of the line bundles dual to the boundary components
%of ${\cal Q}$ of codimension one, and
%it contains the restriction to ${\cal Q}$ of the pull-backs of the Mumford
%Morita Miller classes. 

Denote by 
$\mathbb{P}{\cal H}(k_1,\dots,k_m)$ the stratum of projective
abelian differentials with $m$ zeros of order $k_j$ (here
$k_j\geq 1$ and 
$\sum_jk_j=2g-2$). As an application, we obtain a topological proof of 
the following result of Chen \cite{Ch17}. 

\begin{theo}\label{strata}
  Let ${\cal Q}\subset \mathbb{P}{\cal H}(k_1,\dots,k_m)$
  be a component of a stratum of projective abelian
differentials with $m\geq 1$ zeros of order $k_i$ $(i\leq m)$;
then for all $\ell \geq 1$ we have 
\[P^*\kappa_\ell\vert {\cal Q}= (-1)^{\ell +1}
\sum_i \bigl( \sum_{j=0}^{\ell} 
\frac{k_i}{(k_i+1)^{\ell-j}} \bigr)\eta^\ell.\]
In the case $\ell=1$ this reads
\[P^*\kappa_1\vert {\cal Q}=\sum_i(k_i+1-\frac{1}{k_i+1})\eta\vert {\cal Q}.\]
In particular, the restriction to ${\cal Q}$ of the pull-back of the
tautological ring of ${\cal M}_g$ coincides with the subring 
of $H^*({\cal Q},\mathbb{Q})$ generated by
the restriction of $\eta$.
\end{theo}

%The projective Hodge bundle extends to 
%the Deligne Mumford compactification
%$\overline{\cal M}_g$ of ${\cal M}_g$. 
%Teichm\"uller curves in ${\cal Q}$ extend to complete curves in the closure
%of ${\cal Q}$ in this
%extension  which violate the statement of Theorem \ref{strata} for $k=1$.

%We conjecture that the pull-back to a component of a stratum of ${\cal P}$ 
%of any \emph{Mumford Morita Miller class} of ${\cal M}_g$ 
%vanishes. 
%Note that the first Mumford Morita Miller class of ${\cal M}_g$ 
%is just the class $12\lambda$
%(see \cite{HM98} for more details and
%references). 
Since all but the first Mumford Morita Miller classes vanish 
on ${\cal M}_3$ \cite{Lo95}, Theorem \ref{strata} 
for two strata in $g=3$ is also due to Looijenga and Mondello \cite{LM14}.

The \emph{moduli space of curves with odd theta characteristic}
${\cal M}_{g,{\rm odd}}$ is the moduli space of pairs 
$(X,L)$ where $X$ is a complex curve of genus $g$ and where $L$ is 
a square root of the canonical bundle so that $h^0(X,L)$ is odd. 
This is a finite orbifold cover of ${\cal M}_g$. We use our cohomological 
computation to give some evidence towards the
question in \cite{FL08}. We show

\begin{theo}\label{stratificationthm}
The spin moduli space ${\cal M}_{g,{\rm odd}}$ admits 
an explicit stratification
into complex strata ${\cal D}_j$ of codimension $j-1$ 
$(1\leq j\leq g-1)$ such that for all $j\leq g-1$, 
the restriction of the class $\kappa_1$ to 
the stratum 
${\cal D}_j$ vanishes. In particular, ${\cal D}_j$ 
does not contain a complete
subvariety. 
%If $g\equiv 1$ mod $4$ all strata 
%${\cal D}_j$ do not contain a complete variety.
\end{theo}

The stratum ${\cal D}_j$ is defined as follows. Let 
${\cal Q}=\mathbb{P}{\cal H}(2,\dots,2)^{\rm odd}$
be the component of the stratum of abelian differentials with all zeros of order 2 and
odd parity \cite{KtZ03}. The closure $\overline{\cal Q}$ of ${\cal Q}$ 
in ${\cal P}$ projects onto ${\cal M}_{g,{\rm odd}}$. 
For $j\leq g-1$ let $\overline{{\cal Q}}_j\subset
\overline{\cal Q}$ be the closure of the union of all
boundary components of ${\cal Q}$ of codimension $j-1$ and define
${\cal D}_j=P\overline{\cal Q}_j-P\overline{\cal Q}_{j+1}$. Note that ${\cal D}_{g-1}$
is the projection of the union of those components of 
$\mathbb{P}{\cal H}(2g-2)$ which have an odd spin structure. 
The number of such components is one for $g\equiv 0,3$ mod $4$, and it
equals two otherwise. We conjecture that the 
strata ${\cal D}_j$ are in fact affine for all $j$.

That components of strata do not contain complete subvarieties is due
to Gendron \cite{G20}.

The organization of this article is as follows. 
In Section \ref{oversurfaces} we study the pull-back $P^*{\cal C}$ of the 
\emph{universal curve} ${\cal C}\to {\cal M}_g$ 
to the moduli space of projective abelian differentials. 
We obtain some information on the 
cohomology class $P^*\kappa_1$ by analyzing 
the subvariety of $P^*{\cal C}$ which intersects the fiber over $q$ in the zeros of $q$.
This locus can be used to gain some information on $P^*\kappa_1$ via 
Poincar\*e duality in 
surface bundles as in \cite{H20}.

In Section \ref{thetauto} we begin the study of the second
cohomology group of closures of components of strata, and we 
establish Theorem \ref{strata}. 
This is used in
Section \ref{signatureasintersection} to give a purely topological
proof of the following result of Korotkin and Zograf.
For its formulation, recall that the rational 
cohomology ring of the projectivized Hodge bundle
$P:{\cal P}\to {\cal M}_g$ is the ring 
\[H^*{\cal P},\mathbb{Q})=P^*H^*({\cal M}_g,\mathbb{Q})[\eta]/
\bigl(\eta^{g}+c_1({\cal H})\eta^{g-1}+\cdots +c_g({\cal H})\bigr)\]
where $\eta$ is the tautological class of the fiber and $c_i({\cal H})$ is the
$i$-th Chern class of ${\cal H}$. These Chern classes are polynomials in the 
odd Mumford Morita Miller classes (see Section 2 of \cite{M87}). 
Let $\xi\in H^2({\cal P},\mathbb{Q})$
be the class dual to the stratum ${\cal P}(1)$ of codimension one.

\begin{theo}[Korotkin and Zograf \cite{KZ11}]\label{kozo}
\[\xi=2P^*\kappa_1-(6g-6)\eta.\]
\end{theo}

The computation in \cite{KZ11} extends to the boundary of the 
Deligne Mumford compactification, however we do not pursue 
such a computation in this work. An algebraic geometric proof is due to Chen \cite{Ch13}.

In Section \ref{boundary} we obtain some information on the second cohomology classes 
of the closure of a stratum defined by its codimension one  
boundary strata. 
This is then used in Section \ref{stratification}
to show Theorem \ref{stratificationthm}. 

%and 
%Corollary \ref{hyperelliptic}. 

\bigskip
\noindent
{\bf Acknowledgement:} I am indebted to Dawei Chen for pointing
out an error in a computation in an earlier
version of this paper which made some
part of the results invalid. I am also indebted to him for
showing me the reference \cite{Ch17} which contains an earlier 
elegant proof of Theorem \ref{strata}.  
I am moreover grateful to Dawei Chen and Samuel Grushevsky for
useful discussions.

\section{The zero sets of strata of abelian differentials}\label{oversurfaces}

The goal of this section is to establish some geometric properties of strata of abelian differentials
and use this to obtain some first information on the first Mumford Morita Miller class. 
Throughout we assume that $g\geq 3$.

Let
$\Upsilon:{\cal C}\to {\cal M}_g$
be the \emph{universal curve}, that is, the fiber bundle
(in the orbifold sense) whose fiber over a point $X\in {\cal M}_g$
is just the Riemann surface $X$.
Consider the pull-back 
\[\Pi:P^*{\cal C}\to {\cal P}\]
of the universal curve to the projectivized Hodge bundle.
For each $q\in {\cal P}$, the zeros of $q$ define a subset of
the fiber of $P^*{\cal C}$ of cardinality at most $2g-2$.
Denote by $\Delta\subset P^*{\cal C}$ the locus of all these
zeros.

%For $k\geq 0$ the 
%cohomology group $H^{2k}({\cal Q},\mathbb{Q})$
%can be thought of as the space of linear functionals on the $2k$-th rational 
%homology group of ${\cal Q}$. 
%As $H_{2k}({\cal Q},\mathbb{Q})=H_{2k}({\cal Q},\mathbb{Z})\otimes 
%\mathbb{Q}$,
% and as any $2k$-th homology class in ${\cal Q}$ can be represented by a continuous 
%map from a $2k$-dimensional finite simplicial complex into ${\cal Q}$, 
%it suffices to evaluate for $k\geq 1$ the class 
%$P^*\kappa_k\in H^{2k}({\cal Q},\mathbb{Q})$ 
%on a class in $H_{2k}({\cal Q},\mathbb{Z})$. Such a homology class
%can be represented as the image
%of a finite simplicial complex $B$ of homogeneous dimension $2k$ 
%with $H_{2k}(B,\mathbb{Z})=
%\mathbb{Z}$ by a continuous map $\phi:B\to {\cal Q}$.
%We may furthermore assume that $B$ is a Poincar\'e duality space.
%
%$B$ is a finite simplicial complex. If $k=1$ we may assume that 
%In the case $k=1$, we
%may assume that $B$ is a closed oriented surface and that 
%$\phi$ is smooth.  
% and that $\phi:B\to {\cal Q}$ is smooth.

The following is Proposition 2.2 of \cite{H20}.
For its formulation, a closed subvariety $Y$ of codimension one of
a smooth variety $X$ is a \emph{local complete intersection}
if the ideal sheaf ${\cal F}_Y$ of $Y$ in $X$ can be locally generated
by a single element at every point.  

\begin{proposition}\label{variety}
  The subset $\Delta\subset P^*{\cal C}$
  is a subvariety of $P^*{\cal C}$  of codimension one, and
  it is a local complete intersection.
\end{proposition}

Write ${\cal P}=\cup_k{\cal P}(k)$ where for $0\leq k\leq 2g-3$ the set 
${\cal P}(k)$ is the
locus of projective differentials with precisely $2g-2-k$ zeros.
Then ${\cal P}(k)$ is a smooth suborbifold of ${\cal P}$ of codimenision
$k$, and it is a disjoint union of strata.
Furthermore, we have $\overline{\cal P}(k)=\cup_{j\geq k}{\cal P}(j)$
and hence this decomposition gives ${\cal P}$ the structure of a
complex stratified space. The set
${\cal P}(1)$ consists of the single stratum
$\mathbb{P}{\cal H}(1,\dots,1,2)$.

Now let us consider a component ${\cal D}\subset{\cal P}$
of a stratum of projective abelian
differentials. By Proposition \ref{variety},
if $q\in {\cal D}$ and if $z\in P^*{\cal C}$ is a point in the
fiber of $P^*{\cal C}$ over $q$ which is a zero of $q$ of order $k\geq 1$, then
there is a neighborhood $U$ of $q$ in ${\cal D}$ and a holomorphic
section $\zeta:U\to P^*{\cal C}$ with image in $\Delta$ and such
that $\zeta(q)=z$. If $z$ is the only zero of $q$ of order $k$, then the map which associates
to a differential in ${\cal D}$ its unique zero of order $k$ is a global holomorphic section of 
$P^*{\cal C}$ over ${\cal D}$.

The following proposition gives some further information on
the variety $\Delta$.
It is a more explicit version of a result in Section 8 of \cite{EMZ03}.

\begin{proposition}\label{addboundarycomponent}
  Let ${\cal Q}\subset {\cal P}$ be a component of a
  stratum of projective abelian
differentials and let ${\cal D}$ be an irreducible 
component of codimension one of the boundary of 
${\cal Q}$,  
obtained by
colliding two zeros of differentials in ${\cal Q}$ of order $m_1,m_2\geq 1$ to a single zero
of order $m=m_1+m_2\geq 2$.
\begin{enumerate}
\item  
Assume that the differentials in ${\cal D}$ have 
a single zero of order $m$.
Then 
${\cal Q}\cup {\cal D}\subset {\cal P}$ is a smooth complex orbifold. 
 Let $\zeta:{\cal D}\to P^*{\cal C}$ be 
the holomorphic section defined by the zero of order $m$. 
\begin{itemize}
\item
If $m_1=m_2$ then 
the normal bundle of ${\cal D}\subset {\cal Q}\cup {\cal D}$ is isomorphic to the 
square $\zeta^*(\nu)^2$ of the pull-back $\zeta^*(\nu)$ of the vertical 
tangent bundle of $P^*{\cal C}$ along 
$\zeta$. 
\item
If $m_1\not=m_2$ 
then the normal bundle of
${\cal D}\subset {\cal Q}\cup {\cal D}$ is isomorphic to 
$\zeta^*(\nu)$. 
\end{itemize}
\item If ${\cal D}$ consists of differentials with $k\geq 2$ zeros of order $m$,
then ${\cal Q}\cup {\cal D}$ has a normal crossing singularity along ${\cal D}$
consisting of $k$ smooth local branches which intersect transversely along ${\cal D}$.
\end{enumerate}
\end{proposition}
\begin{proof} Consider first the case that differentials in ${\cal D}$ have a unique
zero of order $m$. Equivalently, 
a zero of a differential $q\in {\cal D}$ arising from a collision of two 
zeros of a differential in ${\cal Q}$ 
is distinguished by its multiplicity. 

Consider the preimages ${\cal D}_0$ and $ {\cal Q}_0$ of ${\cal D}$ and 
${\cal Q}$, respectively, in the moduli space of abelian differentials, that is, in 
the complement
${\cal H}^*$ of the zero section of the Hodge bundle ${\cal H}$.
Then ${\cal D}_0,{\cal Q}_0$ admit a natural holomorphic action of the group 
$\mathbb{C}^*$ by complex
multiplication, with quotients ${\cal D}$ and ${\cal Q}$. 

A neighborhood  
of $q\in {\cal D}_0$ in ${\cal Q}_0\cup {\cal D}_0$ 
is obtained from a neighborhood of $q$ in ${\cal D}_0$ 
by opening the distinguished zero of order $m\geq 2$ 
 to two zeros of prescribed order $m_1\leq m_2$ with $m_1+m_2=m$  
as explained in Section 8 of \cite{EMZ03}. We have to show that this
operation is equivariant with respect to the $\mathbb{C}^*$-action and 
compatible with the complex structure on the quotients ${\cal D},{\cal Q}$, 
and we have to compute the 
normal bundle. 

We proceed as on p.86 of \cite{EMZ03}. An 
abelian differential $q$ on 
a Riemann surface $X$ of genus $g$  
determines a flat metric on $X$ 
in the conformal class of $X$,
with singularities at the zeros of $q$.
Assume that $q\in {\cal D}_0$, let 
$x\in X$ be the distinguished zero of $q$ and let $\epsilon >0$ be sufficiently
small that the closed disk $D(\epsilon)$ of radius
$\epsilon$ about $x$ for the flat 
metric defined by $q$ is a topological disk embedded in $X$.
Let $\delta <\epsilon/2$ and let $\gamma$ be a straight line segment of length
$\delta$ with one endpoint at $x$, parameterized 
proportional to arc length on the interval $[0,1]$. 
We claim that $\gamma$ determines uniquely
a point in ${\cal Q}_0$ with a saddle connection of
length $2\delta$ connecting a zero
of order $m_1$ to a zero of order $m_2$. 

Namely, the closed disk $D(\epsilon)$ can be represented as a union of $2m+2$ 
flat half-disks of radius $\epsilon$.  Their oriented straight line boundary segments are 
segments whose direction for the flat metric is up to sign the direction of 
$\gamma$. These half-disks are 
glued in circular order along the half-segments of length $\epsilon$. 
Let $\hat \gamma$ be the straight line segment 
of the same length $\delta$ as $\gamma$, with one endpoint at $x$,
which makes an 
angle of $(2m_1+1)\pi$ with $\gamma$, measured for the flat metric in counter 
clockwise direction. The direction of $\hat \gamma$ for the flat metric 
is opposite to the direction of $\gamma$. 
Both oriented segments $\gamma,\hat \gamma$
are contained in the oriented boundary of one of the 
embedded flat half-disk of radius $\epsilon$ determined by the direction of 
$\gamma$, with center at $x$. 

Cut $D(\epsilon)$ open along the line segments in the boundary of these 
two half-disks containing $\gamma,\hat \gamma$ and 
glue these two half-disks along the segment of length $2\delta$
centered at $x$, leaving a pair of free line segments of  length $\epsilon-\delta$ on 
the boundary of each 
of the half-disks. The remaining
half-disks determined by the direction of $\gamma$
can be glued to these two half-disks isometrically
along the boundary in a circular
fashion as illustrated on p.87 of \cite{EMZ03}.
The result of this construction is
a new  flat metric on the surface $S_g$, of the same area. 
This flat metric is defined by an abelian 
differential $q(\gamma)\in {\cal Q}_0$ which is uniquely
determined by $q$, the choice of $\gamma$
and the decomposition $m=m_1+m_2$. 
It has a distinguished saddle connection of length $2\delta$ connecting
the two newborn zeros of order $m_1,m_2$. If we denote by
$x(\gamma)$ the midpoint of this saddle connection, then the
complements of the disks of radius $\epsilon$ about $x$
and $x(\gamma)$ for the flat metrics defined by $q,q(\gamma)$ are 
isometric. 

For fixed $q\in {\cal D}_0$ and 
as the endpoint of the geodesic 
segment $\gamma$ different from $x$ 
varies in the punctured disk of radius
$\epsilon/2$ about $x$, the above
construction defines a 
family of abelian differentials in ${\cal Q}_0$
depending on a complex parameter varying in a punctured 
disk in $\mathbb{C}$, that is,
in a punctured coordinate disk about $x$ in $X$.
As explained on p.87 of \cite{EMZ03}, for a suitable choice of a basis of 
relative homology of the closed 
surface $S_g$ of genus $g$, marked at the zeros of $q$, 
all but perhaps one coefficient of the corresponding
period coordinates are constant, and the remaining period coordinate 
is changed by $-\gamma$ (the missing factor $1/2$ in our description 
stems from a slight variation in the setup). 

As a consequence, this construction gives rise to a holomorphic map from 
of a disk in $\mathbb{C}$ into ${\cal H}^*$ which intersects ${\cal D}_0$ in the single point $q$. 
As it commutes with 
multiplication of a differential with a nonzero complex number, it descends to 
a holomorphic map from of a disk in $\mathbb{C}$ into ${\cal P}$ which intersects ${\cal D}$   
in a single point, and this point is the projection of $q$. Furthermore, by naturality 
with respect to suitable period coordinates, chosen as in the previous
paragraph, it depends in a holomorphic 
fashion on $q\in {\cal D}_0$. 

If $m_1=m_2=m/2$, then the two flat surfaces obtained from this construction from segments
$\gamma_1,\gamma_2$ of the same length $\delta$ 
which make an angle of $(m+1)\pi$ at $x$ for the flat
cone metric, are isometric. As a consequence,  
the holomorphic involution $z\to -z$ in the tangent
space of $X$ at the distinguished zero of order $m$
extends to an involution
of the local parameter space for opening a zero of order $m$,
and the map which associates to 
a point in this local parameter space the resulting
area one abelian differential factors through the quotient
of this involution. 

Lemma 8.1 of \cite{EMZ03} shows that in the open and dense subset
of ${\cal D}_0$ consisting of flat metrics which do not
admit any isometry, this is the only 
identification. More precisely,  the lemma states that
each direction for the flat metric of $q$ gives rise to precisely $m+1$ distinct flat surfaces with
\emph{labeled} zeros.
The isometry between two of these flat metrics arising from the involution $z\to -z$ 
as discussed in the previous paragraph exchanges the two newborn zeros of order $m_1=m_2$ 
of the differentials in ${\cal Q}_0$
and hence changes the labels. As we do not fix labels here, by
Lemma 8.1 of \cite{EMZ03}   
the above construction defines a holomorphic 
parameterization of a neighborhood of
${\cal D}_0$ in ${\cal Q}_0\cup {\cal D}_0$. By equivariance under the
action of $\mathbb{C}^*$, this parameterization descends to a
parameterization of a neighborhood of ${\cal D}$ in ${\cal Q}$.
Moreover, 
the normal bundle of ${\cal D}$ in ${\cal Q}$ is the square of the 
pull-back of the vertical tangent bundle of 
$P^*{\cal C}$ at the distinguished zero
as described in the proposition as its fiber over a fixed zero $x$ is doubly covered by the
fiber of the vertical tangent bundle at $x$.

If $m_1\not=m_2$, then locally near ${\cal D}_0$ the two newborn zeros of the
differentials arising from
the above construction can not be exchanged. Thus in this case Lemma 8.1 of \cite{EMZ03} 
shows that the above construction defines a 
holomorphic parameterization of a neighborhood of ${\cal D}_0$ in 
${\cal Q}_0$ and hence it parameterizes a neighborhood of ${\cal D}$ in ${\cal Q}$. 
Furthermore, the normal bundle of ${\cal D}$ equals the pull-back of the 
vertical tangent bundle of $P^*{\cal C}$ at the distinguished zero of order $m$. 
This shows  the first part of the proposition.

Now let us assume that differentials in ${\cal D}$ have $k\geq 2$ zeros of order
$m$. Let $q\in {\cal D}$ and let 
$q_0\in {\cal D}_0$ be a preimage of $q$. 
If $x_1\not=x_2$ are two zeros of the same order $m$ for $q_0$,
then for differentials in a 
neighborhood $V_0$ of $q_0$ in ${\cal D}_0$ we can open up 
the zero $x_1$ locally in a neighborhood of $x_1$, preserving the flat metric
on the complement of a small disk about $x_1$, in particular near $x_2$, 
and we obtain a differential in ${\cal Q}_0$. This construction
determines the structure of a smooth complex orbifold on the union of the projection
$V$ of $V_0$ to ${\cal D}$ with
some open subset $U(x_1,V)$ of ${\cal Q}$ which is compatible with the complex
structure and the topology of ${\cal P}$.
Similarly, 
preserving $x_1$ and opening $x_2$  gives rise to the structure of a smooth complex
orbifold on the union of $V$ with an open subset $U(x_2,V)$ of ${\cal Q}$. 

If the flat metric 
defined by $q\in {\cal D}_0$ does not admit an isometry which exchanges $x_1$ and $x_2$, that is, if
$q$ belongs to the open and dense set of smooth points of the orbifold 
${\cal P}$, then for a suitable choice of the neighborhood $V_0$ of $q$, the sets
$U(x_1,V)$ and $U(x_2,V)$ are disjoint from each other, and the unions
$U(x_1,V)\cup V$ and $U(x_2,V)\cup V$ intersect transversely along $V$.
As this construction is local, it can be extended to more than two zeros of the same order $m$
and yields 
the second part of the proposition.
\end{proof}

By Proposition \ref{variety}, the zeros of 
a projective abelian differential $q$ on a Riemann surface $X$ define a
codimension one complex subvariety $\Delta$ in the holomorphic fiber bundle
$P^*{\cal C}\to {\cal P}$.
Over a fixed stratum 
${\cal Q}$, this subvariety is just a holomorphic
\emph{multisection}, that is,
it can locally be described as consisting of $\ell$ holomorphic
sections of $P^*{\cal C}\to {\cal P}$ where $\ell\geq 1$ is the
number of zeros of differentials in ${\cal Q}$. Note that this a purely local
statement. 

Our next goal is to describe the behavior of these multisections
as the differentials approach
a boundary component of ${\cal Q}$ of codimension one,
given by a collision of two of
these zeros.

For the formulation of this description, recall that a two-sheeted holomorphic 
branched covering $\zeta:S\to B$ of two complex
curves $S,B$, branched at a point $x\in S$, is given in suitable
holomorphic coordinates $z,w$ on $S,B$ 
 near $x$ and $\zeta(x)$, respectively, with $x=\{z=0\},\zeta(x)=\{w=0\}$,
 as $w=z^2$. The following definition is a special case of well known constructions
 and is included here for clarity of the exposition.

\begin{definition}\label{branchedcover} 
For a number $m\geq 2$, an \emph{$m$-sheeted holomorphic branched covering}
of two complex manifolds $M, N$ of the same complex dimension, 
\emph{doubly branched} along a complex hypersurface $H\subset M$, is a surjective holomorphic map
$\zeta:M\to N$ with the following properties.
\begin{enumerate}
\item The restriction of $\zeta$ to $H$ is a
biholomorphism onto its image. 
\item The restriction of $\zeta$ to $M-\zeta^{-1}(\zeta(H))$ is an $m$-sheeted unbranched holomorphic
covering.
\item There exists a neighborhood $V$ of $\zeta^{-1}(\zeta(H))-H$ such that the restriction of 
$\zeta$ to $V$ is an $(m-2)$-sheeted unbranched holomorphic covering. 
\item For every $x\in H$ there are holomorphic coordinates
$(z_1,\dots,z_n)$ on an open neighborhood $U$ of $x$ in 
$M$ and holomorphic coordinates $(w_1,\dots,w_n)$ on a neighborhood of 
$\zeta(x)$ in $N$,
with $H\cap U=\{z_1=0\}$ and the property that in these coordinates, the map $\zeta$ is defined
by $(z_1,z_2,\dots,z_n)\to (z_1^2,z_2,\dots,z_n)$.
\end{enumerate}
\end{definition}

We also have to look at a two-sheeted
holomorphic branched covering from a singular variety $M$ 
onto a smooth complex manifold $N$
which is branched along a normal crossing divisor of $M$
in the following sense.

\begin{definition}\label{branchedcover2}
Let $Z$ be a codimension one 
complex subvariety of a smooth complex variety $M$, 
smooth away from a codimension one subvariety
$H\subset Z$, and assume that $Z$ has a normal crossing 
singularity along $H$.
%Assume that for each point $p\in H$, there are 
%complex coordinates $(z_0,z_1,\dots,z_n)$, 
%defined on an open neighborhood $U$ of $p$ in $Z$,
%such that $H\cap U=\{z_0=z_1=0\}$ and $M\cap U=
%\{z_0^2-z_1^2=0\}$.
A \emph{holomorphic branched covering} of $Z$ onto
a smooth complex variety $N$, doubly branched along the singular hypersurface $H$, 
is a surjective holomorphic map $\zeta:Z\to N$ with the following properties.
\begin{enumerate}
\item The restriction of $\zeta$ to $H$ is a
biholomorphic map onto its image. 
\item The restriction of $\zeta$ to $Z-\zeta^{-1}(\zeta(H))$ is 
an $m$-sheeted unbranched holomorphic
covering.
\item There exists a neighborhood $V$ of $\zeta^{-1}(\zeta(H))-H$ such that the restriction 
of $\zeta$ to $V$ is an $(m-2)$-sheeted unbranched holomorphic covering. 
\item For every $x\in H$ there are holomorphic coordinates
$(z_0,z_1,\dots,z_n)$ on a neighborhood $U$ of $x$ in $M$, and holomorphic 
coordinates $(w_1,\dots,w_n)$ on $N$ near $\zeta(x)$,
with $Z\cap U=\{z_0^2-z_1^2=0\}$ and 
$H\cap U=\{z_0=z_1=0\}$ and the property that in these coordinates, the map $\zeta$ is the restriction 
to $Z\cap U$ of the map defined
by $(z_0,z_1,\dots,z_n)\to (z_1,z_2,\dots,z_n)$.
\end{enumerate}
\end{definition}

The following statement uses Definition \ref{branchedcover}
and Definition \ref{branchedcover2} in the orbifold sense. That is,
the definitions (which are mainly local) apply after perhaps 
passing to a finite manifold cover.

\begin{proposition}\label{boundarycomp2}
Let ${\cal D}$ be a codimension one irreducible boundary 
component of 
a stratum ${\cal Q}\subset {\cal P}$,  obtained by
colliding two zeros of differentials in ${\cal Q}$ of order $m_1,m_2\geq 1$ to a single zero
of order $m=m_1+m_2\geq 2$, and let $Z=\Delta\cap 
\Pi^{-1}({\cal Q}\cup {\cal D})$.
\begin{enumerate}
\item
Assume that ${\cal D}$ consists of differentials
with a single zero of order $m$.
\begin{itemize}
\item If $m_1=m_2$ then $Z$ is a smooth complex suborbifold of
$\Pi^{-1}({\cal Q}\cup {\cal D})$ of codimension one.
The projection $\Pi\vert Z:Z\to 
{\cal Q}\cup {\cal D}$ is a holomorphic branched covering, 
doubly branched along the zeros of order $m$ of the differentials in ${\cal D}$.
\item If $m_1\not=m_2$ then $Z$
 has a normal crossing singularity at the zeros of 
order $m$ of the differentials in ${\cal D}$. The projection $\Pi\vert Z:Z\to 
{\cal Q}\cap {\cal D}$ is a holomorphic  branched 
covering, doubly branched along the 
zeros of order $m$ of the differentials in ${\cal D}$.
\end{itemize}
\item
If ${\cal D}$ consists of differentials with $k\geq 2$ 
zeros of order $m$, then property (1) holds true for each of the $k$ local branches of ${\cal Q}$ which 
intersect transversely along ${\cal D}$.
\end{enumerate}
\end{proposition}
\begin{proof}
As in the proof of Proposition \ref{addboundarycomponent},
denote by ${\cal D}_0,{\cal Q}_0$ the preimage of
${\cal D},{\cal Q}$ in the moduli
space of abelian differentials. 
Let $x$ be a zero of order $m$ for an abelian differential
$q\in {\cal D}_0$
and let $\epsilon >0$ be such that the closed disk of
radius $\epsilon $ about $x$ for the flat metric defined by $q$ 
is isometrically embedded in the Riemann surface $X$ underlying $q$.
Then there is a canonical complex coordinate $z$ for
$X$ near $x$, so that in this coordinate,
the differential $q$ equals the differential $z^mdz$.  

Let $\gamma$ be a straight line segment of length $\delta<\epsilon/2$ for the flat metric 
defined by $q$ issuing from $x$. Opening up the zero of $q$ into two zeros of 
order $m_1,m_2$ along $\gamma$ as described in Section 8 of \cite{EMZ03} and recorded
in the proof of Proposition \ref{addboundarycomponent} defines a differential $q(\gamma)$
with a saddle connection of length $2\delta$. The direction of the saddle connection equals the 
direction of $\gamma$, and the midpoint of the saddle connection is the natural image of
the zero $x$ of $q$. 

Let us consider a model for this situation. 
It is given by a complex coordinate $z$ on the Riemann surface underlying $q(\gamma)$, 
containing $0$ in its range, 
a straight line segment through $0$ for the flat metric
defined by $q(\gamma)$
of length $2\delta$, 
and a zero of order $m_1,m_2$ at the endpoints.
For a suitable choice of such a complex coordinate $z$, the differential  
can be represented as 
\[(z-a)^{m_1}(z+a)^{m_2}dz\]
where $a=a(\gamma)\in \mathbb{C}^*$ can be computed from
the length and direction of the saddle connection. 
Solving $(z-a)^{m_1}(z+a)^{m_2}dz=dw$ near $z=0$ expresses the local coordinate 
$w$ describing the flat metric of the differential $q(\gamma)$, normalized to vanish 
at the point $z=0$, as a polynomial of degree $m+1$ 
in the coordinate $z$,
with coefficients depending holomorphically on the complex variable $a\in \mathbb{C}^*$. 

As in the proof of Proposition \ref{addboundarycomponent}, 
for a fixed basis of relative homology of the surface $X$ marked at the
zeros of $q$, period coordinates for the differentials in
a neighborhood of $q$ in ${\cal D}_0$ 
extend to period coordinates on a neighborhood of $q$ in 
${\cal Q}_0\cup {\cal D}_0$ 
using as an extra parameter the distinguished saddle connection between the 
newborn zeros of length less than $\epsilon$ (here $\epsilon >0$ 
is a constant which depends on the flat metric defined by $q$), 
and these coordinates also 
define holomorphic coordinates on a neighborhood $V$ of
$q$ in ${\cal Q}\cup {\cal D}$ by 
equivariance under the action of $\mathbb{C}^*$.
In other words, the differentials 
resulting from this construction
depend in a holomorphic fashion on period coordinates for $q$ and the endpoint of 
the straight line segment $\gamma$.

Let us assume that the differentials in ${\cal D}$ have a
single zero of order $m$.
By Proposition \ref{addboundarycomponent}, in this case 
${\cal D}_0$ is a smooth suborbifold of ${\cal Q}_0\cup {\cal D}_0$. 
Let $\Pi_0:P_0^*{\cal C}\to {\cal H}^*$ be the pull-back of 
the universal curve to the complement ${\cal H}^*$ of the zero section in 
the Hodge bundle. Considering again a differential $q\in {\cal D}_0$ with
a zero $x$ of order $m$, 
the above discussion shows that there 
are holomorphic local functions 
$(z,v_1,\dots,v_k)$ on a neighborhood $U$ of $x$ in $\Pi_0^{-1}({\cal Q}_0\cup {\cal D}_0)$, 
with $\{v_i={\rm const}\}$ defining the foliation into the fibers of the bundle 
$P_0^*{\cal C}\to {\cal H}^*$, and the following additional properties.  
\begin{enumerate}
\item The functions $v_i$ are pull-backs by $\Pi_0$ of holomorphic local functions $\hat v_i$ on 
${\cal Q}_0\cup {\cal D}_0$. 
\item $\{\hat v_1=0\}=\Pi_0(U)\cap {\cal D}_0\subset {\cal D}_0\cup {\cal Q}_0$, 
and  $(\hat v_2,\dots,\hat v_k)$ are holomorphic coordinates for ${\cal D}_0$ (in the orbifold sense).
\item The restriction of the function $z$
to a fiber of $\Pi_0$ over $\Pi_0(U)$ is a holomorphic coordinate on the fiber.
\item
The zeros of order $m$ for the projective differentials in $\Pi_0(U)\cap {\cal D}_0$, 
viewed as points in the fiber of 
$P_0^*{\cal C}$ over the points in ${\cal D}_0$,
are contained in the domain of the fiber coordinate $z$. 
The abelian differentials $u\in \Pi_0(U)$  
are given in the fiber coordinate $z$  
by $u=(z- \hat v_1(u))^{m_1}(z+\hat v_1(u))^{m_2}dz$.  
\end{enumerate}

By Proposition \ref{addboundarycomponent} and its proof, 
in the case $m_1=m_2$, the differentials parameterized by 
$(\hat v_1,\hat v_2,\dots,\hat v_m)$ and $(-\hat v_1,\hat v_2,\dots,\hat v_m)$ coincide and hence 
$(\hat v_1^2,\hat v_2,\dots,\hat v_m)$ are complex coordinates for ${\cal Q}_0\cup {\cal D}_0$
(in the usual sense which equips the quotient of the unit disk
$\{\vert z\vert <1\}$  by the involution
$z\to -z$ with the structure of a Riemann surface, biholomorphic to the disk). 
Putting $w=v_1^2$, 
the equation for the subvariety
$Z=\Delta\cap \Pi_0^{-1}({\cal D}_0\cup {\cal Q}_0)$ 
of $P_0^*{\cal C}$ near the zero $x$ of order $m$ 
equals $z^2-w=0$. For fixed $\hat v_2,\dots,\hat v_k$ 
this locus is parameterized by 
$z\to (z,z^2)$ in the coordinate functions $(z,w)$. 
Thus $Z$ is a smooth suborbifold of $\Pi_0^{-1}({\cal Q}_0\cup {\cal D}_0)$ 
whose tangent space at the zero $x$ of order $m$
of a differential in ${\cal D}_0$ 
contains the tangent space of the fiber of $\Pi_0$ at $x$.
Furthermore, in these coordinates,
near the zero of order $m$ the projection map $\Pi_0\vert Z:Z\to {\cal Q}_0\cup {\cal D}_0$ 
is of the form required in Definition \ref{branchedcover}. The first item in part (1) of the proposition
follows from invariance under the action of $\mathbb{C}^*$. 

In the case $m_1\not=m_2$ the tuple of functions $(\hat v_1,\hat v_2,\dots,\hat v_k)$ defines
coordinates on ${\cal Q}_0\cup {\cal D}_0$. The equation $(z-v_1)^{m_1}(z+v_1)^{m_2}=0$ 
is the equation of a union of two complex lines in $\mathbb{C}^2$ 
which intersect transversely in a single point $0$. Thus the union of these
two planes has a normal crossing singularity at  $0$.
As this applies to a neighborhood of the zero of order $m$ in the fiber over any point in 
${\cal D}_0$, it follows that $Z$ is a complex variety with a normal crossing singularity 
at the zero of order $m$. Furthermore, in these coordinates, near the zero of order $m$ 
the projection 
$\Pi_0\vert Z:Z\to {\cal Q}_0\cup {\cal D}_0$  
is of the form required in Definition \ref{branchedcover2}. 
The second item in part (1) of the proposition follows again from invariance under the action of 
$\mathbb{C}^*$.

If the differentials in ${\cal D}$ have $k\geq 2$ zeros of order $m$, then there are $k$ 
sheets for the intersection of ${\cal Q}\cup {\cal D}$ with ${\cal D}$, with  
a normal crossing intersection, and 
the above discussion applies separately to each of these sheets. This shows 
part (2) of the proposition. 
\end{proof}

\section{On the cohomology of 
  the closure of a stratum}\label{thetauto}

In this section we begin the investigation of the second cohomology of 
the closure $\overline{\cal Q}$ of 
a projective stratum ${\cal Q}$ of abelian differentials, and we establish
Theorem \ref{strata} (see \cite{Ch17}). 

%We show that 
%the linear subspace of
%$H^2(\overline{\cal Q},\mathbb{Q})$ which is 
%spanned by $\eta$ and the tautological
%classes of degree $2$ coincides with the subspace
%spanned by the class $\eta$ and the cohomology classes
%dual to the components of the boundary strata
%of ${\cal Q}$ of codimension one.
%Furthermore, 
As in Section \ref{oversurfaces}, let $P^*{\cal C}$ be the pull-back of the universal curve
${\cal C}\to {\cal M}_g$ 
to ${\cal P}$ and let $\Delta$ be the codimension one subvariety
defined by the zeros of the projective differentials in ${\cal P}$. 
Let $\nu$ be the vertical tangent bundle of $P^*{\cal C}$ and 
let $\tau\to P^*{\cal C}$ be the pull-back of the tautological line bundle 
on ${\cal P}$. 

In the sequel we always denote by $\zeta^*$ the dual of a complex
line bundle $\zeta$, or, equivalently, the inverse of $\zeta$ in the group of 
all complex line bundles on $P^*{\cal C}$. 
We are interested in topological properties of
holomorphic line bundles, that is, in their Chern class. Some of the statements
below also hold in the holomorphic setting. An example is the following

\begin{lemma}\label{trivial2}
The line bundle $\nu\otimes \tau$ is trivial on 
$P^*{\cal C}-\Delta$. 
\end{lemma}
\begin{proof}
Let $\alpha\in \nu^*$ be any
vector in the vertical cotangent bundle of $P^*{\cal C}$ at 
a point $y\in P^*{\cal C}-\Delta$. 
We may view
$\alpha$ as a $\mathbb{C}$-linear 
functional on the holomorphic tangent space $\nu_y$ 
of the fiber of $P^*{\cal C}$ through $y$.

The fiber $\tau_{\Pi(y)}$ of $\tau$ at the point $\Pi(y)\in {\cal P}$ consists
of the line of holomorphic one-forms on the Riemann surface
$P\Pi(y)$ 
in the projective class defined by $\Pi(y)$.
As $y$ is not a zero of a differential in this projective class and 
as the dimension of the
complex vector space of $\mathbb{C}$-linear
functionals $\nu_y\to \mathbb{C}$ equals one,
there is precisely one
holomorphic one-form $\Lambda(\alpha)\in \tau_{\Pi(y)}$ 
whose restriction to $\nu_y$ 
coincides with $\alpha$. Then $\alpha\to \Lambda(\alpha)$ defines an isomorphism
between $\nu^*$ and $\tau$ on $P^*{\cal C}-\Delta$, and hence it defines  
a nowhere vanishing section of the bundle 
$(\nu^*)^*\otimes \tau=\nu\otimes \tau$ 
on $P^*{\cal C}-\Delta$ which is what we wanted to show.
\end{proof}

The codimension one complex subvariety $\Delta\subset P^*{\cal C}$
is a Weil divisor and hence a Cartier divisor in the
smooth complex orbifold $P^*{\cal C}$. Thus it defines a 
holomorphic line bundle $L\to P^*{\cal C}$ whose first Chern class
$c_1(L)$ is dual to $\Delta$ in the sense of intersection (see \cite{Fu84} for
more and for references). This line bundle is trivial on the complement of 
$\Delta$ and restricts to the normal bundle on the regular part of $\Delta$.
Lemma \ref{trivial2} indicates that this line bundle may be a power of
the bundle $\nu\otimes \tau$ in the Picard group of $P^*{\cal C}$. 

Instead of pursuing this line of idea, we identify the cohomology class defined 
by the bundle $L$ which is a weaker statement, but sufficient for our purpose. 
Thus the following statement is meant in the topological sense, and it can be viewed
as a version of Theorem 3.13 of \cite{H20}. 
By the usual exact sequence in sheaf cohomology
defined by the exponential function, it is equivalent to stating 
that the Chern classes of these line bundles coincide.

\begin{proposition}\label{identify}
$\nu^*\otimes \tau^*=L$ on $P^*{\cal C}$.
\end{proposition}
\begin{proof}
  Let $\Sigma$ be a closed oriented surface and let
  $\phi:\Sigma\to P^*{\cal C}$
  be a smooth map. By transversality,
after changing $\phi$ with a homotopy 
 we may assume that
  $\Pi\circ \phi$ intersects ${\cal P}(1)$ in only isolated points and that 
  furthermore, if $x\in \Sigma$ is such that $\Pi\circ \phi(x)\in {\cal P}(1)$
  then $\phi(x)\not\in \Delta$
(see \cite{H20} for a detailed discussion).
Moreover, as $\Delta\cap \Pi^{-1}({\cal P}(0))$ is
the image of a holomorphic 
multisection of the restriction of $\Pi^{-1}({\cal P}(0))$ to ${\cal P}(0)$,  
we may assume that 
$\phi(\Sigma)\cap \Delta$ consists of 
  finitely many transverse intersection points, 
  say the points  $x_1,\dots, x_s$. 
  Each of these points $x_i$ is a simple
   zero of the differential $\Pi(x_i)$.
    It now
  suffices to show that
  $\phi^*(c_1(\nu^*\otimes\tau^*))[\Sigma]$
  equals the number of intersection points of $\phi(\Sigma)$ with
  $\Delta$, counted with sign and multiplicity. Here $[\Sigma]$ denotes the 
  fundamental cycle of $\Sigma$.

Let us without loss of generality assume in addition 
that $\Sigma$ is equipped with a complex structure and that 
the restriction of $\phi$ to a disk neighborhood $D_i$ of $x_i$ in $\Sigma$ 
is a holomorphic or
antiholomorphic embedding of $D_i$ into a fiber of $\Pi$. The latter
can be achived by modifying $\phi$ 
with a small homotopy. 
%Write $X_i=\Pi^{-1}(\Pi(x_i))$.

By Lemma \ref{trivial2}, the restriction of the 
line bundle $\nu^*\otimes \tau^*$ to the complement of 
$\Delta$ admits a natural trivialization 
whose restriction to the boundary $\phi(\partial D_i)$ of the disk $\phi(D_i)$
can be described as follows. 

Choose a trivialization $Y$ of the tangent bundle of $D_i\sim \phi(D_i)$. 
%It defines a section of $T^\prime P^*{\cal C}$ 
%over $\phi(D_i)$ by associating to $x\in \phi(D_i)$ the linear functional on $T^\prime_x\phi(D_i)$ 
%which maps $d\phi(Y)$ to $1$. 
Choose furthermore a trivialization $\xi$ 
of $\nu^*\otimes \tau^*$ over
$\phi(D_i)$. We may assume that the contraction of $\xi$ with $Y$ is constant,
that is, it is the pull-back of a fixed nontrivial vector $q$ in the
fiber of $\tau^*$ over $\Pi \phi(D_i)$.

By Lemma \ref{trivial2} and its proof, 
the contraction of the vector field $Y$ with
the restriction of the trivialization of
$\nu^*\otimes \tau^*$ on $P^*{\cal C}-\Delta$ to 
$\phi(\partial D_i)$ equals the 
section of $\tau^*$ which associates
to a point $p\in \phi(\partial D_i)$ the element of $\tau^*_p$ 
which is defined by
the following linear functional $\zeta_p$. 
Recall 
that the fixed vector $q$ is 
a holomorphic differential on
the fiber of $P^*{\cal C}$ containing $\phi(D_i)$ 
which does not vanish at $p$; we then have 
$\zeta_p(aq)=aq(Y_p)$.

Since $x_i$ is a zero of the differential $q$ and the only zero of $q$ in $D_i$,
if we equip 
$\phi(\partial D_i)$ with the orientation defined by the Riemann surface
structure of the fiber of $P^*{\cal C}$ containing $\phi(D_i)$, then
for this orientation, the map  $S^1=\phi(\partial D_i)\to \mathbb{C}^*$ defined by 
$\zeta_p(q)=q(Y_p)$ has rotation number one. This shows that if 
the restriction of $\phi$ to $D_i$ is holomorphic, then 
the rotation number of the restriction to $\partial D_i$ of 
the trivialization of the bundle $\phi^*(\nu^*\otimes \tau^*)$ on $P^*{\cal C}-\Delta$ 
to $\partial D_i$  with respect to
a trivialization of $\phi^*(\nu^*\otimes \tau^*)$  
on the disk $D_i$ equals one, and it equals $-1$ otherwise. 

As a consequence, the value $c_1(\nu^*\otimes \tau^*)[\Sigma]$ indeed equals the number
of intersections of $\phi(\Sigma)$ with $\Delta$, counted with sign and multiplicities. 
\end{proof}

%The subset $\Delta_{k_j}$ intersects the open suborbifold $P^*{\cal C}\vert {\cal Q}$ 
%in the image of a holomorphic
%multisection.

Let now ${\cal Q}\subset {\cal P}$ be a component of a stratum of projective
abelian differentials, with $m$ zeros of order $k_i$ $(i=1,\dots,s)$.
The closure in $P^*{\cal C}$ of the 
locus of the zeros of order $k_i$ in $\Pi^{-1}{\cal Q}$ is
a complex subvariety $\Delta_{k_i}$ of
$\Delta\cap \Pi^{-1}\overline{\cal Q}$.
Its intersection with $\Pi^{-1}{\cal Q}$ is a holomorphic multisection of
$\Pi^{-1}{\cal Q}$ and hence a smooth complex orbifold. 
We have

\begin{lemma}\label{trivial}
%Let $\Delta_{k_j}$ be a component of $\Delta$ with the property  
%that the order of the zeros of the points in $\Delta_{k_j}$
%equals $k_j$. Then
  $(\nu^*)^{\otimes(k_j+1)}\vert \Delta_{k_j}\cap \Pi^{-1}{\cal Q}=
  \tau\vert \Delta_{k_j}\cap \Pi^{-1}{\cal Q}$.
\end{lemma}
\begin{proof} The lemma is well known, and a proof is contained in
  \cite{Ch17}, see also \cite{EKZ}.
  We give a topological proof.

A point $y\in \Delta_{k_j}$ is a zero of order $k_j$ of a 
projective holomorphic one-form on the fiber of $P^*{\cal C}$ containing $y$. 
The fiber $\tau_y$ 
of $\tau$ at $y$ can be identified with the complex line of holomorphic 
one-forms in this projective class. 

As $y$ is a zero of this holomorphic one-form of order $k_j$, there is a holomorphic 
local coordinate $z$ on $\Pi^{-1}(\Pi(y))$ near $y$, with
$y$ corresponding to $z=0$, such that a nonzero differential in the line 
$\tau_y$ can locally near $y$ be written in the form 
$az^{k_j}dz$ for some $a\in \mathbb{C}^*$. 
This differential then defines a singular euclidean metric near 
$y$, which has a cone point of cone angle $2\pi(k_j+1)$ at $y$.

%Choose a smooth Hermitean metric on the 
%complex line bundle  
%$\nu^*$, with norm $\Vert \,\Vert$. 
A geodesic arc $\gamma$ in the fiber $\Pi^{-1}(\Pi(y))$ with one endpoint at $y$ and 
no singular point in its interior defines  
a $\mathbb{C}$-valued functional $\beta_\gamma$ on $\tau_y$ by
associating to a differential $\omega\in \tau_y$
the \emph{complex} length of $\gamma$ with respect
to the singular euclidean metric 
defined by $\omega$, that is, we distinguish real and
imaginary part of this length,
and we distinguish the orientation.

%a vector $\beta(Y)\in \kappa^*_y$ by requiring that $\beta(Y)(Y)=\Vert Y\Vert$, that is,
%the vector $Y$ is a horizontal tangent vector for the singular euclidean metric defined
%by $\kappa^*_y$, pointing away from $y$, and its length for the singular euclidean metric
%equals the norm of $Y$.  

If $\gamma$ is not trivial then we have
$\beta_{\gamma^\prime}=\beta_\gamma$  
if and only if $\gamma^\prime$ is obtained from $\gamma$ by a rotation
at $y$ by the angle 
$e^{2\pi i \ell/(k_j+1)}$ for some $\ell\in \mathbb{Z}$ in the complex
coordinate $z$. 
As a consequence, for a nontrivial arc $\gamma$ the
map which associates to $\theta\in S^1$ the functional
defined by the image of $\gamma$ by rotation with angle $\theta$
defines a $k_j+1$-sheeted covering of
the fiber of $\tau^*$ at $y$. As $y\in \Delta_{k_j}$ was arbitrary, this
shows that $\nu\vert \Delta_{k_j}$ is a $k_j+1$-th root of
$\tau^*\vert \Delta_{k_j}$. Equivalently, we have
$\tau\vert \Delta_{k_j}=(\nu^*)^{\otimes (k_j+1)}\vert \Delta_{k_j}$.
As this identification is natural and hence depends in a holomorphic fashion on 
$y\in \Delta_{k_j}$ this shows 
the lemma.
\end{proof}

\begin{remark}\label{consistent}
Let us consider the restriction of the bundle $\nu^*\otimes \tau^*$ to the 
preimage $\Pi^{-1}({\cal Q})$ of a 
stratum ${\cal Q}$ with $m$ zeros of order $k_i$. Proposition 
\ref{identify} shows that the  restriction of $\nu^*\otimes \tau^*$ 
to $\Pi^{-1}{\cal Q}-\Delta$ is trivial, and 
taking the tensor product of the equation in Lemma \ref{trivial} with $\nu$ and 
dualizing yields that its restriction to $\Delta_{k_j}$ equals the restriction of 
$\nu^{k_j}$. This is consistent as the restriction of a holomorphic line bundle
to a defining divisor equals the normal bundle of the divisor, and since $\Delta\cap 
\Pi^{-1}{\cal Q}$ is a multisection of $\Pi^{-1}{\cal Q}$ over ${\cal Q}$, this normal
bundle equals the vertical tangent bundle $\nu$. Furthermore, the 
multiplicity of $\Delta_{k_j}$ in $\Delta\cap \Pi^{-1}{\cal Q}$ equals $k_j$.
\end{remark}

We use the results obtained so far to show Theorem \ref{strata} from the introduction
(see \cite{Ch17} and also \cite{EKZ}). To this end
recall that 
the $\ell$-th Mumford Morita Miller class
$\kappa_\ell\in H^{2\ell}({\cal M}_g,\mathbb{Q})$ 
is defined as follows \cite{M87}. Let as before 
$\Upsilon:{\cal C}\to 
{\cal M}_g$ be the universal curve, let $B$ be closed oriented manifold
and let $\phi:B\to {\cal M}_g$ be a smooth map; 
then $E=\phi^*{\cal C}$ is a surface bundle over $B$ with vertical tangent bundle
$\nu$.  We have 
\[\kappa_\ell(\phi(B))=\Upsilon_*(c_1(\nu)^\ell)(\phi(B))\]
where $\Upsilon_*$ is the Gysin push-forward map obtained
by integration over the fiber. 

The following proposition treats the case $\ell=1$ and is included here to make
the argument more transparent.

\begin{proposition}\label{degree2}
  Let ${\cal Q}$ be a component of a stratum
  $\mathbb{P}{\cal H}(\ell_1,\dots,\ell_m)$ of projective abelian differentials
(here the $\ell_j$ are counted with multiplicity); then   
$P^*\kappa_1\vert{\cal Q}=\sum_j(\ell_j+
1-\frac{1}{\ell_j+1})\eta\vert {\cal Q}$.
\end{proposition}
\begin{proof} It suffices to evaluate $P^*\kappa_1$ on the image of 
a smooth map $\phi:B\to {Q}$ where $B$ is a closed oriented surface.

To this end let $1\leq k_1<\cdots <k_m$ be the distinct orders of
the zeros of the differentials in ${\cal Q}$, and let
$d_i\geq 1$ be the multiplicity of the zero of order $k_i$.
Let $\Pi^E:E\to B$ be the surface bundle $(P\circ \phi)^*{\cal C}$.
The hypersurface $\Delta_{k_j}$ in $P^*{\cal C}$ pulls back to a smooth 
multisection of $E\to B$ which defines a homology class
$\delta_{k_j}\in H_2(E,\mathbb{Q})$. Write 
$\delta=\sum_jk_j\delta_{k_j}\in H_2(E,\mathbb{Q})$.

By Proposition \ref{identify} and naturality of Chern classes under
pull-back by inclusions, 
the first Chern class 
$\phi^*(c_1(\nu^*)-c_1(\tau))$ of the pull-back  
bundle $\phi^*(\nu^*\otimes \tau^*)$ is Poincar\'e dual to 
the homology class $\delta$. Denoting again by $\nu^*$ the vertical cotangent
bundle of $E\to B$ and 
omitting the pull-back by $\phi$ in our notation, we have 
\[(c_1(\nu^*)-c_1(\tau))\cup \xi[E]=\xi(\delta)\]
for every $\xi\in H^2(E,\mathbb{Q})$, where $[E],[B]$ is the fundamental cycle
of $E,B$.

As a consequence, we compute (see \cite{H20} for details) 
\begin{align}\label{expansion1}
\phi^*P^*\kappa_1[B]& =c_1(\nu^*)\cup c_1(\nu^*)[E] \\
 & =(c_1(\nu^*)-c_1(\tau))\cup c_1(\nu^*)[E]+c_1(\tau)\cup c_1(\nu^*)[E]\notag\\
   & =c_1(\nu^*)(\delta)+c_1(\tau)\cup c_1(\nu^*)[E].\notag
    \end{align}

%On the other hand, 
%Lemma \ref{dual}, the class
%$\delta$ is Poincar\'e dual to $c_1(\nu^*)-c_1(\tau)$. Moreover, 
By Lemma \ref{trivial},  the restriction of $(\nu^*)^{\otimes (k_j+1)}$
to $\Delta_{k_j}$ is equivalent to the restriction of $\tau$
and therefore 
\begin{equation}\label{expansion2}
c_1(\nu^*)(\delta_{k_j})=\frac{1}{k_j+1}c_1(\tau)(\delta_{k_j}).
\notag\end{equation}

The restriction 
$\Pi\vert \Delta_{k_j}\cap \Pi^{-1}{\cal Q}:\Delta_{k_j}\cap \Pi^{-1}{\cal Q}
\to {\cal Q}$ is an (unbranched) covering 
of degree $d_j$. Since $c_1(\tau)$ is the pull-back to $E$ of the class
$\phi^*(\eta)\in H^2({\cal Q},\mathbb{Q})$, we have 
\[c_1(\tau)(\delta_{k_j})=d_j \eta(\phi_*[B]).\]
Thus by the definition of $\delta$, we have 
\begin{equation}\label{expansion3} 
  c_1(\nu^*)(\delta)=\sum_j\frac{k_jd_j}{k_j+1}\eta(\phi_*[B]).
\end{equation}
%\[\delta\cdot \delta=-\sum_j\frac{k_j^2}{k_j+1}d_jb.\]
%Together with the fact that
%$\delta$ is Poincar\'e dual to $c_1(\nu^*)-c_1(\tau)$, we have
%\[\delta\cdot \delta=
%  (c_1(\nu^*)-c_1(\tau))^2[E]=c_1(\nu^*)^2[E]-2c_1(\nu^*)\cup c_1(\tau)[E].\]
%Here we used the fact that $c_1(\tau)$ is the pull-back of a cohomology
%class on $B$ and hence $c_1(\tau)^2[E]=0$.

Using once more that
$c_1(\tau)$ is the pull-back of the class $\phi^*(\eta)$ on $B$ and that the evaluation
of $c_1(\nu^*)$ on a fiber of $P^*{\cal C}$ equals $2g-2$,  
we also have
\begin{equation}\label{expansion4}
  c_1(\nu^*)\cup c_1(\tau)[E]=(2g-2)\eta(\phi_*[B]).\end{equation} 
%Now $c_1(\nu^*)^2[E]=\kappa_1[B]$ by definition of
%$\kappa_1$, and , using again the
%fact that $c_1(\tau)$ is the pull-back of a class on $B$.
Now $\sum_jk_jd_j=2g-2$ and hence we conclude from
equations (\ref{expansion1}), (\ref{expansion3}) and (\ref{expansion4})
that 
\begin{align}
  \kappa_1(\phi_*[B])&
=\sum_j(1-\frac{1}{k_j+1})d_j\eta(\phi_*[B])+\sum_jk_jd_j\eta(\phi_*[B]) \notag\\
 & = \sum_j(k_j+1-\frac{1}{k_j+1})d_j \eta (\phi_*[B]).\notag \end{align}
Since the degree $d_j$ equals the number of zeros of order $k_j$ which
are contained in $\Delta_{k_j}\cap \Pi^{-1}(y)$ for any $y\in B$,
this concludes the proof of the proposition.
\end{proof}

\begin{example}
Let us consider the principal stratum
${\cal Q}={\cal P}-{\cal P}_1$. 
%By Theorem \ref{kappaone}, the
%divisor ${\cal P}_1$ is Poincar\'e dual to the
%cohomology class $2(P^*\kappa_1-(3g-3)\eta)$. This implies that the
%line bundle with first Chern class $2(P^*\kappa_1-(3g-3)\eta)$ is trivial
%on the principal stratum ${\cal Q}$, or, equivalently, we have
%$P^*\kappa_1=(3g-3)\eta$ on ${\cal Q}$.
Then $k_j=1$ for all $j$ and hence Proposition \ref{degree2} shows that
\[P^*\kappa_1\vert {\cal Q}=\sum_j(2-\frac{1}{2})\eta\vert {\cal Q}=
  (3g-3)\eta\vert {\cal Q},\]
which is consistent with 
Theorem \ref{kozo}. 
\end{example}

\begin{proof}[Proof of Theorem \ref{strata}]
Let ${\cal Q}\subset {\cal P}$ be a component of a stratum of 
projective abelian differentials with $m$ zeros of order $k_i$.   
It suffices to evaluate $P^*\kappa_\ell$ for $\ell\geq 1$ on the image of 
a finite $2\ell$-dimensional simplicial 
Poincar\'e duality complex
$B$ of homogeneous dimension $2\ell$ 
under a continuous map $\phi:B\to {\cal Q}$.

To this end consider the surface bundle $\Pi:E\to B$ defined by $P\circ \phi$.
Let $[E]$ be the fundamental class of $E$.
The zeros of the differentials in $\phi(B)$ of order $k_i$ define
define a multisection $\Delta_{k_i}^E$ of $E$, and each of these
multisections defines a homology class $\delta_{k_i}\in H_{2\ell}(E,\mathbb{Q})$.
By Lemma \ref{dual},  the homology class
$\delta=\sum_jk_j \delta_{k_j}$ is Poincar\*e dual to
$c_1(\nu^*)-c_1(\tau)$ (compare the discussion in the proof of Proposition \ref{degree2}
which carries over without change; here we omit in our notation that
all classes on $E$ are pull-backs of classes on $P^*{\cal C}$).
In particular, 
by the definition of the $\ell$-th Mumford Morita Miller
class $\kappa_{\ell}$ \cite{M87} and the fact that $c_1(\tau)^{\ell+1}[E]=0$ since
$c_1(\tau)$ is the pull-back of a cohomology class on ${\cal Q}$,
using the Ansatz in the proof of Proposition \ref{degree2} we have
\begin{align}\label{kappak1}
(-1)^{\ell+1}  \kappa_\ell(P\phi_*[B]) &=c_1(\nu^*)^{\ell+1}[E]   \\ &=
 (c_1(\nu^*)-c_1(\tau))\cup c_1(\nu^*)^\ell[E]+
  c_1(\tau)\cup c_1(\nu^*)^\ell[E]. \notag \end{align}

Taking into account the fact that all contributions are of degree two and hence
their cup products commute, we can 
expand the second summand in this equation as 
\begin{align}\label{kappak2}
 c_1(\tau)\cup & c_1(\nu^*)^\ell[E]  \\ 
&=(c_1(\tau)\cup (c_1(\nu^*)-c_1(\tau))\cup c_1(\nu^*)^{\ell-1}[E] +c_1(\tau)^2\cup c_1(\nu^*)^{\ell-1}[E]\notag \\
&=(c_1(\nu^*)-c_1(\tau))\cup c_1(\tau)\cup c_1(\nu^*)^{\ell-1}[E] +c_1(\tau)^2\cup c_1(\nu^*)^{\ell-1}[E].\notag
\end{align}  
Proceeding inductively, we obtain the equation
\begin{align}\label{kappak3}
 (-1)^{\ell +1} &\kappa_\ell(P\phi_*[B])\\ 
&= \sum_{j=0}^{\ell}(c_1(\nu^*)-c_1(\tau))
                       \cup \bigl( c_1(\tau)^j \cup c_1(\nu^*)^{\ell-j}\bigr) [E]
       +c_1(\tau)^{\ell+1}[E] \notag \\            
&=\sum_{j=0}^{\ell}c_1(\tau)^j\cup c_1(\nu^*)^{\ell-j}(\delta).\notag
\end{align}

The restriction of $\Pi$ to each of the sets $\Delta^E_{k_i}\subset \Delta^E$
is a covering. 
Moreover, by Lemma \ref{trivial}, we have
$(\nu^*)^{\otimes (k_i+1)}\vert \Delta_{k_i}^E=\tau\vert \Delta_{k_i}^E$.
This yields 
\[c_1(\tau)^j\cup c_1(\nu^*)^{\ell-j}(\delta_{k_i})=\frac{1}{(k_i+1)^{\ell-j}}c_1(\tau)^\ell(\delta_{k_i})\]
and therefore 
\[\sum_{j=0}^\ell c_1(\tau)^j\cup c_1(\nu^*)^{\ell-j}(\delta)=
\sum_i \bigl( \sum_{j=0}^{\ell} 
\frac{k_i}{(k_i+1)^{\ell-j}} \bigr) c_1(\tau)^\ell(\delta_{k_i})\]
and hence the theorem follows from the fact that $c_1(\tau)$ is the pull-back of
the class $\phi^*(\eta)\in H^2({\cal Q},\mathbb{Q})$ 
and that furthermore the restriction of 
the projection $\Pi$ to $\Delta_{k_i}^E$ is an unbranched covering of degree $d_i$.
\end{proof}

 \section{The Poincar\'e dual of ${\cal P}_1$}
\label{signatureasintersection}

In this section we apply the results of
Section \ref{oversurfaces} 
to represent the signature of a surface bundle as an intersection number. 
This yields a purely topological proof of Theorem \ref{kozo}
\cite{KZ11}.

The projectivized Hodge bundle $P:{\cal P}\to {\cal M}_g$
extends to a bundle over the Deligne Mumford 
compactification $\overline{\cal M}_g$ of ${\cal M}_g$
which we denote by $P:\overline{\cal P}\to \overline{\cal M}_g$. 
A standard spectral sequence argument shows
that the second rational cohomology group of 
$\overline{\cal P}$ is generated by the pull-back of 
the second rational 
cohomology group of $\overline{\cal M}_g$ together with the cohomology
class $\eta$ of the tautological line bundle of the fibre (see
Lemma 1 of \cite{KZ11} for details).
Since $\overline{\cal P}$ 
is a Poincar\'e duality space,
there is a cohomology class
$\xi\in H^2(\overline{\cal P},\mathbb{Q})$ which is Poincar\'e dual
to the closure $\overline{{\cal P}(1)}$ of 
${\cal P}(1)$. The set $\overline{{\cal P}(1)}$ is in fact a (singular)
complex hypersurface in $\overline{\cal P}$.

The 
class $\xi$ can be expressed as a rational linear combination
of the class $\eta$ and the pull-back of a set of  generators of 
$H^2(\overline{\cal M}_g,\mathbb{Q})$. Such a set of generators consists
of the first Chern class $\lambda$ of the Hodge bundle 
as well as the Poincar\'e duals $\delta_j$ $(0\leq j\leq \lfloor g/2\rfloor)$
of the irreducible components of the boundary divisor $\overline{\cal M}_g-{\cal M}_g$.
We refer to \cite{HaM98} for more information.

The first Chern class $\lambda$ of the Hodge bundle
is non-zero precisely when $g\geq 3$ \cite{H83,HaM98}.
The class 
$\delta_0$ is dual to the divisor of stable curves with a single non-separating node, and
for $1\leq j\leq g/2$, the class $\delta_j$ is dual to the divisor of stable curves with 
a node separating the stable curve into a curve of genus $j$ and
a curve of genus $g-j$. 

Korotkin and Zograf 
calculated this linear combination
(the formula before Remark 2 on p.456
of \cite{KZ11}) using ideas from mathematical physics. 
An algebraic geometric 
proof of Theorem \ref{kappaone} 
is due to Chen \cite{Ch13}.

\begin{theorem}[Korotkin and Zograf \cite{KZ11}]\label{kappaone}
\[\xi=24P^*\lambda-(6g-6)\eta-P^*(2\delta_0-3\sum_{j=1}^{\lfloor g/2\rfloor} \delta_j).\] 
\end{theorem}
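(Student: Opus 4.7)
The plan is to fix each coefficient in the decomposition
$$\eta = a\psi + b\,P^*\lambda + c_0\,P^*\delta_0 + \sum_{j=1}^{\lfloor g/2\rfloor} c_j\,P^*\delta_j$$
guaranteed by the spectral sequence argument of Lemma 1 of \cite{KZ11}, by pairing both sides with suitable 2-dimensional test classes in $\overline{\cal P}$ and computing the intersection numbers with $\overline{\cal P}_1$ via the branched multisection representative from Section \ref{branchedmultisection}.

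First I determine $a$ by restricting to a generic fiber $\mathbb{P}^{g-1}\subset\overline{\cal P}$ over a non-hyperelliptic interior point of $\cal{M}_g$. On this fiber $P^*\lambda$ and $P^*\delta_j$ vanish, so pairing $\eta$ with a projective line $L$ reduces to computing the degree inside $\mathbb{P}^{g-1}$ of the hypersurface of holomorphic differentials with a multiple zero. A Wronskian calculation on the canonically embedded curve gives degree $6g-6$; combined with the fact that $\psi$ restricts to minus the hyperplane class on fibers, this forces $a = -(6g-6)$.

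For $b$ and the $c_j$, I use test classifying maps $\phi:B\to\overline{\cal M}_g$ from closed oriented surfaces $B$ meeting the boundary divisors transversely. Lift $\phi$ to a section $\theta:B\to\overline{\cal S}$ of the sphere bundle of the Hodge bundle via Lemma \ref{lift}, and set $E = \phi^*\overline{\cal C}$. By Lemma \ref{branchedcover}, the signed count $\langle\phi^*\eta,[B]\rangle$ of intersections of $Q\theta(B)$ with $\overline{\cal P}_1$ equals the number of branch points of the associated degree-$(2g-2)$ branched multisection $f:\Sigma\to E$. Riemann--Hurwitz expresses this count as $(2g-2)\chi(B)-\chi(\Sigma)$, and the adjunction formula together with Proposition \ref{firstchern} (which identifies $[\Sigma]=\mathrm{PD}(c_1(\nu^*))$) rewrites $\chi(\Sigma)$ in terms of $\int_E c_1(\nu^*)^2$. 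The identity $\kappa_1=\Pi_*c_1(\nu^*)^2$ gives $\int_E c_1(\nu^*)^2 = \langle\phi^*\kappa_1,[B]\rangle$, and on families avoiding the boundary Mumford's relation $\kappa_1 = 12\lambda - \delta$ forces $b=24$.

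The main obstacle will be separating the contributions of $\delta_0$ and $\delta_j$, whose coefficients differ both in sign and magnitude ($-2$ versus $+3$) because of genuinely different local geometry of abelian differentials at the two types of nodes. At a non-separating node a limiting differential acquires a pair of simple poles with opposite residues, so the multisection $\Sigma$ extends across $\delta_0$ with a controllable local correction to the branch-point count; at a separating node of genera $(j,g-j)$ the limiting differential is forced to vanish identically on one component, producing a qualitatively different local contribution to $\chi(\Sigma)$. By applying the Euler characteristic analysis above to pencils degenerating to each boundary divisor one at a time and comparing the resulting local corrections, the values $c_0=-2$ and $c_j=3$ are forced, completing the computation.
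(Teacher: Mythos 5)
Your determination of the two ``interior'' coefficients is essentially sound and runs parallel to what the paper actually proves. The computation of $a=-(6g-6)$ by restricting to a fiber $\mathbb{P}^{g-1}$ and taking the degree of the discriminant (the Wronskian of a generic pencil in $|K_X|$ is a section of $K_X^{\otimes 3}$, hence has $6g-6$ zeros) is a legitimate alternative to the paper's route, which instead evaluates on the pullback of the genus-$2$ pencil under an unramified degree-$(g-1)$ cover and gets the same number $6(g-1)=6g-6$. Your derivation of $b=24$ via the branched multisection, the signed Riemann--Hurwitz count $(2g-2)\chi(B)-\chi(\Sigma)$, adjunction, and Proposition \ref{firstchern} is exactly the content of Lemma \ref{intersection1} (which phrases it as $Q\theta(B)\cdot{\cal P}_1=2c_1(\nu^*)(\delta)=2\kappa_1(\phi(B))=24\lambda(\phi(B))$), so this part is correct and is the same argument in different clothing.

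The genuine gap is the last step: the boundary coefficients $c_0=-2$ and $c_j=3$ are asserted, not derived. Note that the paper itself does not prove them either --- it explicitly declines to ``duplicate'' the boundary computation of \cite{KZ11} and only establishes the restriction of the formula to ${\cal P}|_{{\cal M}_g}$ (Theorem \ref{korotkinzograf}), so here you are attempting strictly more than the paper, and it is precisely this extra part that fails. Two concrete problems. First, your description of the local geometry at a separating node is wrong: for a stable curve $C_1\cup_p C_2$ with a separating node, the residue theorem forces the residue at $p$ to vanish, so $H^0(\omega)\cong H^0(K_{C_1})\oplus H^0(K_{C_2})$ and a \emph{generic} limiting differential does not vanish identically on either component; what actually happens is that each restriction, viewed as a section of $\omega|_{C_i}=K_{C_i}(p_i)$, acquires a forced zero at the node on each branch, and it is this excess vanishing (an excess intersection of $\overline{\cal P}_1$ with the boundary divisor) that produces the coefficient $3$. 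Second, even granting the correct local models, the machinery of Section \ref{branchedmultisection} is built for surface bundles with smooth fibers over a base mapping into ${\cal M}_g$; to run your Euler-characteristic bookkeeping on a pencil degenerating to $\delta_0$ or $\delta_j$ you must replace $\nu^*$ by the relative dualizing sheaf, control how the multisection $\Sigma$ closes up over a nodal fiber, and separate the contribution of zeros colliding with the node from genuine double zeros. None of that analysis is carried out, and ``the values are forced'' is not a computation; as written, the coefficients $-2$ and $+3$ could equally well be any other pair of numbers.
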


Let $\Pi^E:E\to B$ be a surface bundle over a surface, defined by 
a smooth map $f:B\to {\cal M}_g$. Let ${\cal S}\to {\cal M}_g$
be the sphere subbundle of the Hodge bundle over ${\cal M}_g$;
it admits a fibration $\Xi:{\cal S}\to {\cal P}$ with fiber a circle. 
Let $F:B\to {\cal S}$ be a lift of $f$
to ${\cal S}$.
Such a lift exists since the dimension of the fiber of ${\cal S}$ equals
$2g-1>2$ (see for example Lemma 3.5 of \cite{H20}). 
Denote by $\Delta_E\subset F^*\Xi^*P^*{\cal C}$ the pull-back of the variety 
$\Delta$ in the pull-back of the universal curve to $B$.
Then $\Delta_E$ defines a homology
class $[\Delta_E]\in H_2(E,\mathbb{Q})$ which is Poincar\'e dual to
the Chern class $c_1(\nu^*)$
of the vertical cotangent of $E$ 
by Proposition \ref{identify} and the fact that 
the pull-back to ${\cal S}$ of the tautological bundle on ${\cal P}$ 
is trivial. We refer to Section \ref{thetauto} for a more detailed discussion.
Denoting as before by $[E]$ the fundamental class of $E$ we have

\begin{corollary}\label{selfinter2}
$c_1(\nu^*)\cup c_1(\nu^*)[E]=[\Delta_E]\cdot [\Delta_E]=
c_1(\nu^*)[\Delta_E]$.
\end{corollary}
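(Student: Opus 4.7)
The plan is to derive Corollary \ref{selfinter2} as a formal consequence of Proposition \ref{firstchern} together with Poincar\'e duality for the closed oriented 4-manifold $E$. The total space $E$ is a smooth $S_g$-bundle over the closed oriented surface $B$, so it is itself a closed oriented $4$-manifold and carries a fundamental class $[E]\in H_4(E,\mathbb{Z})$ together with the usual intersection pairing on $H_2(E,\mathbb{Z})$ dual to the cup product on $H^2(E,\mathbb{Z})$.

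The key general fact to invoke is the following standard consequence of Poincar\'e duality: if $M$ is a closed oriented $4$-manifold and if $\alpha_1,\alpha_2\in H^2(M,\mathbb{Z})$ have Poincar\'e duals $a_1,a_2\in H_2(M,\mathbb{Z})$, then
\[
(\alpha_1\cup\alpha_2)[M]=\alpha_1(a_2)=\alpha_2(a_1)=a_1\cdot a_2,
\]
where the right hand side is the algebraic intersection number of the cycles $a_1$ and $a_2$, computed after perturbing them into transverse position.

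I would then apply this general fact to $M=E$ with $\alpha_1=\alpha_2=c_1(\nu^*)$ and $a_1=a_2=\delta$. By Proposition \ref{firstchern}, the class $\delta$ is Poincar\'e dual to $c_1(\nu^*)$, so the hypothesis of the above identity is satisfied. The specialization immediately yields
\[
c_1(\nu^*)\cup c_1(\nu^*)\,(E)=\delta\cdot\delta=c_1(\nu^*)(\delta),
\]
which is precisely the assertion of the corollary.

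There is no genuine obstacle here: the corollary is essentially a repackaging of Proposition \ref{firstchern} into a form convenient for subsequent use, where one wants to compute the characteristic number $c_1(\nu^*)\cup c_1(\nu^*)(E)$ by explicitly intersecting two transverse branched multisections of the type produced in Lemma \ref{cycle} and Lemma \ref{branchedcover}. In practice one obtains such a transverse perturbation by choosing two generic lifts $\theta_1,\theta_2:B\to{\cal S}$ of $\phi$ and realizing the self-intersection $\delta\cdot\delta$ as the signed count of points where the corresponding cycles $f_1(\Sigma_1)$ and $f_2(\Sigma_2)$ meet.
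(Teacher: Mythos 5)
Your proposal is correct and is essentially the same argument as the paper's, which simply states that both equalities follow from Poincar\'e duality for $E$; you have only made explicit the standard identity $(\alpha_1\cup\alpha_2)[M]=a_1\cdot a_2=\alpha_1(a_2)$ and the input from Proposition \ref{firstchern} that $\delta$ is dual to $c_1(\nu^*)$. Nothing further is needed.
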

\begin{proof}
Both equations follows from Poincar\'e duality 
for the surface bundle $E$.
\end{proof}

By the definition of the first Mumford Morita Miller class $\kappa_1$ as 
explained before Proposition \ref{degree2}, we obtain

\begin{corollary}\label{tau}
$f^*\kappa_1[B]=c_1(\nu^*)[\Delta_E].$
\end{corollary}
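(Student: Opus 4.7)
The plan is to read off the identity by splicing together equation (\ref{kappasigma}) and Corollary \ref{selfinter2}. By naturality of the cup product and of pullback on cohomology, for the smooth orbifold map $\phi:B\to {\cal M}_g$ classifying $E\to B$, the evaluation $\phi^*\kappa_1(B)$ equals $\kappa_1$ paired with the pushforward $\phi_*[B]$; by the definition of $\kappa_1$ as $\Pi_*(c_1(\nu^*)\cup c_1(\nu^*))$ and the push-pull formula for the Gysin map, this pairing is precisely the value $c_1(\nu^*)\cup c_1(\nu^*)\,[E]$ appearing on the right-hand side of (\ref{kappasigma}). Applying Corollary \ref{selfinter2} to rewrite this as $c_1(\nu^*)(\delta)$ gives the desired equality.

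Concretely, I would write the proof as the single chain
\begin{equation*}
\phi^*\kappa_1(B)\;\stackrel{(\ref{kappasigma})}{=}\; c_1(\nu^*)\cup c_1(\nu^*)(E)\;\stackrel{\text{Cor.}\,\ref{selfinter2}}{=}\; c_1(\nu^*)(\delta),
\end{equation*}
where the first equality is just a restatement of (\ref{kappasigma}) with $\phi^*\kappa_1$ evaluated on the fundamental class of $B$, and the second is immediate from Corollary \ref{selfinter2}.

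The only real point to watch is the orbifold interpretation of (\ref{kappasigma}): $\kappa_1$ lives in $H^2({\cal M}_g,\mathbb{Q})$ and $\phi$ is a map of orbifolds, so one should either pass to a finite manifold cover of ${\cal M}_g$ (or equivalently a finite cover of $B$ on which $\phi$ factors through a smooth manifold) and use the standard Gysin formula there, or work directly in de Rham cohomology, where the pushforward is integration along the fiber and the formula is tautological. Either way, no obstacle of substance arises; given the two preceding corollaries, the proof is a one-line composition.
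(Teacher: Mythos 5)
Your proof is correct and is exactly the paper's argument: the paper derives Corollary~\ref{tau} by the same one-line composition of equation~(\ref{kappasigma}) with Corollary~\ref{selfinter2}, the word ``hence'' standing in for the whole proof. Your remark on the orbifold interpretation of the Gysin formula is a reasonable extra precaution but adds nothing the paper does not already take for granted.
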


In view of the identity $\kappa_1=12\lambda$ on ${\cal M}_g$
\cite{HaM98}, the formula in Theorem \ref{kozo} is a special case of 
Theorem \ref{kappaone}. The following lemma is
the first step towards a purely topological proof.
For its formulation, recall that we can look at the
intersection number between $\phi(B)$ and the hypersurface
${\cal P}(1)\subset {\cal P}$.

\begin{lemma}\label{intersection1}
$\Xi F(B)\cdot {\cal P}(1)=2[\Delta_E]\cdot[\Delta_E]=2c_1(\nu^*)[\Delta_E]$;
in particular, the restriction of the class $\xi$ to 
${\cal P}\vert {\cal M}_g$ satisfies 
\[\xi=2P^*\kappa_1+ a\eta=24 P^*\lambda
+a\eta\] for some $a\in \mathbb{Q}$. 
\end{lemma}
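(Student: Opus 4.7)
My plan is to establish the identity $Q\theta(B)\cdot {\cal P}_1 = 2\delta\cdot\delta$ by a local analysis at the singular points $A \subset \Delta$, and then to deduce the claimed form of $\eta|_{{\cal P}|{\cal M}_g}$ from this identity together with the known structure of $H^2$. The transverse intersection points of $Q\theta(B)$ with ${\cal P}_1$ are in bijection with the points of $A$, each contributing a local sign $\epsilon(y)=\pm 1$ according to whether $Q\theta(B)$ crosses ${\cal P}_1$ positively or negatively at $y$; thus $Q\theta(B)\cdot {\cal P}_1 = \sum_{y\in A}\epsilon(y) =: |A|_{\mathrm{sgn}}$.

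To relate this to $\delta\cdot\delta$, I would consider the natural bundle map $\Phi\colon \nu|_\Delta \to N_{\Delta/E}$ obtained by composing the inclusion $\nu \hookrightarrow TE|_\Delta$ with the quotient $TE|_\Delta \to TE|_\Delta/T\Delta = N_{\Delta/E}$. At a regular point of $\Delta$, the fiber tangent is transverse to $T\Delta$, so $\Phi$ is an isomorphism; at each singular point $y \in A$, Lemma \ref{branchedcover} gives $T_y\Delta = \nu_y$, so $\Phi$ vanishes. In the local model $\theta(v)(u)=(v+u^2)du$ produced by Lemma \ref{branchedcover}, a direct computation gives $\Phi(\partial_u) = 2u\,\partial_v\bmod T\Delta$, so $\Phi$ has a simple zero at the singular point with local complex degree $\epsilon(y)$. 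Equipping $N_{\Delta/E}$ with the complex structure pulled back from $\nu$ via $\Phi$ at regular points (which extends smoothly across the singular set) makes $\Phi$ a section of the complex line bundle $\nu^*|_\Delta \otimes N_{\Delta/E}$, with signed zero count $|A|_{\mathrm{sgn}}$. On the other hand,
\[
\deg\bigl(\nu^*|_\Delta \otimes N_{\Delta/E}\bigr) \;=\; c_1(\nu^*)(\delta) \;+\; c_1(N_{\Delta/E})[\Sigma] \;=\; 2\,\delta\cdot\delta,
\]
using Corollary \ref{selfinter2} for the first summand, and for the second the standard identity that the Euler number of the normal bundle of a smoothly embedded oriented surface equals its self-intersection. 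Combining yields $Q\theta(B)\cdot {\cal P}_1 = 2\delta\cdot\delta = 2c_1(\nu^*)(\delta)$.

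For the concluding assertion, I would invoke the spectral sequence argument cited from \cite{KZ11}: $H^2({\cal P}|{\cal M}_g;\mathbb{Q})$ is generated by $\psi$ together with the pullback of $H^2({\cal M}_g;\mathbb{Q})$, which for $g\geq 3$ is one-dimensional and spanned by $\lambda=\kappa_1/12$ by Harer's theorem. Hence $\eta|_{{\cal P}|{\cal M}_g} = cP^*\lambda + a\psi$ for some $c,a\in\mathbb{Q}$. To pin down $c$, take a surface bundle $E\to B$ with $\phi^*\kappa_1[B]\neq 0$ (e.g., a Kodaira--Atiyah fibration). Since $\theta$ lifts $\phi$ to the sphere bundle of the Hodge bundle, it yields a nowhere-vanishing section of the pullback of the tautological line bundle over ${\cal P}$, so $(Q\theta)^*\psi = 0$; combining with Corollary \ref{tau} and the identity just proved,
\[
c\,\lambda[\phi(B)] \;=\; Q\theta(B)\cdot {\cal P}_1 \;=\; 2\phi^*\kappa_1[B] \;=\; 24\,\lambda[\phi(B)],
\]
forcing $c=24$ and $\eta|_{{\cal P}|{\cal M}_g} = 2P^*\kappa_1 + a\psi$.

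The main obstacle is correctly accounting for the factor of $2$: the tempting ``derivative section $d_u\theta$'' on $\Sigma$ would seem to give a simple zero at each singular point and hence $\delta\cdot\delta = |A|_{\mathrm{sgn}}$, but this is off by $\tfrac12$ because at a zero of $\theta$ the quantity $d_u\theta$ is coordinate-free only as a section of the symmetric square $(\nu^*)^{\otimes 2}$, not of $\nu^*$. The bundle-map formulation via $\Phi$ circumvents this by exhibiting the correct line bundle $\nu^*|_\Delta\otimes N_{\Delta/E}$, whose degree $c_1(\nu^*|_\Delta)+c_1(N_{\Delta/E}) = 2\delta\cdot\delta$ makes the factor of $2$ manifest.
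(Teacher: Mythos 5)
Your argument is correct and follows essentially the same route as the paper: your bundle map $\Phi\colon\nu|_\Delta\to N_{\Delta/E}$, vanishing simply with sign at the branch points, is exactly the mechanism by which the paper identifies the normal bundle $N$ with $\nu\otimes H^+\otimes(H^-)^{-1}$ via the Hurwitz formula, and both proofs then combine the resulting identity $c_1(N)-c_1(\nu|_\Delta)=Q\theta(B)\cdot{\cal P}_1$ with the Poincar\'e-duality relation $\delta\cdot\delta=c_1(\nu^*)(\delta)$ of Corollary \ref{selfinter2}. Your explicit determination of the coefficient of $P^*\lambda$ (evaluating against a bundle with $\kappa_1\neq 0$ after observing that the lift $\theta$ trivializes the pullback of the tautological bundle, so $(Q\theta)^*\psi=0$) spells out a step the paper leaves implicit, but it is the intended argument.
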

\begin{proof}  Since the complex codimension of ${\cal P}(2)\subset {\cal P}$ equals 
two and since by Proposition \ref{addboundarycomponent} the union
${\cal P}(0)\cup {\cal P}(1)$ is a smooth orbifold (recall to this end that 
${\cal P}(1)$ is just the stratum of abelian differentials with a single zero of order two and
all other zeros of order one), by transversality we may assume that 
$\Xi F(B)\subset {\cal P}(0)\cup {\cal P}(1)$ and that furthermore  $\Xi F(B)$ intersects
${\cal P}(1)$ transversely in finitely many points. 

Let as before $\Delta_E\subset E=F^*\Xi^*P^*{\cal C}$ 
be the pull-back of $\Delta$ in the surface bundle $E\to B$. 
By the first part of Proposition \ref{boundarycomp2}, 
$\Delta_E$ is a smoothly embedded
surface in $E$. 
Furthermore, the restriction of the projection $\Pi^E:E\to B$
to $\Delta_E$ is a branched covering, doubly branched at each double zero 
of a differential in the finitely many intersection points of 
$\Xi F(B)$ with ${\cal P}(1)$.

At each branch point $x\in \Delta_E$, the surface $\Delta_E$ is
tangent to the fiber of $E$ at $x$, and the orientation of
$\Delta_E$ coincides with the orientation of the fiber if and only
if the intersection point $\Xi F(x)$ of $\Xi F(B)$ with ${\cal P}(1)$ 
is positive.

Assigning to each branch point in $\Delta_E$ this sign
defines a divisor $A$ on the surface $\Delta_E$. 
The tangent bundle of $\Delta_E$ 
can be represented in the form 
$(\Pi_E\vert \Delta_E)^*(TB)\otimes(-H)$ where
$H$ is the line bundle on $\Delta_E$ with 
divisor $A$. Thus the 
normal bundle $N$ of $\Delta_E$ can
be written as $N=\nu\otimes H^+(\otimes H^-)^{-1}$ where $H^+$
is the line bundle defined by the divisor on $\Delta_E$ 
which corresponds
to the positive intersection points of $\Xi F(B)$ 
with ${\cal P}(1)$, and $H^-$ is the line bundle defined
by the divisor on $\Delta_E$ which corresponds to the negative
intersection points. 

This implies that the
self-intersection 
number in $E$ of the surface $\Delta_E\subset E$ 
equals
\[[\Delta_E]\cdot [\Delta_E]=c_1(\nu)[\Delta_E]+b\] where
$b=\Xi F(B)\cdot {\cal P}(1)$ 
is the number of branch points of $\Pi_E\vert \Delta_E$, counted with sign.

By Poincar\'e duality (see Corollary \ref{selfinter2}),
we have
\[c_1(\nu^*)[\Delta_E]=[\Delta_E]\cdot [\Delta_E]=
c_1(\nu)[\Delta_E]+b=-c_1(\nu^*)[\Delta_E]+b\]
and hence $b=2c_1(\nu^*)[\Delta_E]$. Together with Corollary \ref{tau}
and the fact that $\kappa_1=12\lambda$ as classes in 
$H^2({\cal M}_g,\mathbb{Q})$ \cite{HaM98}, 
this completes the proof of the lemma.
\end{proof}

For the proof of Theorem \ref{kozo} we are left with computing the constant $a\in \mathbb{Q}$.

\begin{proof}[Proof of Theorem \ref{kozo}] 
To calculate the coefficient $a\in \mathbb{Q}$ in the
expression in Lemma \ref{intersection1} note first 
that in the case $g=2$, we have $\lambda=0$ \cite{HaM98} and 
\[\xi=2P^*\kappa_1-6\eta=-6\eta.\] 

Namely,  for $g=2$ 
the complex rank of the Hodge bundle equals 2
and hence the fibre of the bundle ${\cal P}\to {\cal M}_2$ over the moduli space
of genus 2 complex curves is just $\mathbb{C}P^1$. 
A \emph{Weierstrass point} on a genus 2 complex curve $X$ is a double zero of a holomorphic
one-form on $X$. Now
$X$ has precisely $6=-3\chi(S_2)$ Weierstrass points and hence
the intersection number of the fibre of the bundle ${\cal P}\to {\cal M}_2$ 
with the divisor
${\cal P}(1) $ equals $6$. As the evaluation on $\mathbb{C}P^1$ of the 
Chern class of the
tautological line bundle on $\mathbb{C}P^1$
equals $-1$, the formula in the theorem follows from Poincar\'e duality. 

For arbitrary $g\geq 3$ 
choose a complex curve $X\in {\cal M}_g$ 
which admits an unbranched cover of degree $g-1$ onto a curve
$Y\in {\cal M}_2$.
The projective line 
of projective holomorphic one-forms on $Y$ 
pulls back to a projective line of projective holomorphic one-forms on $X$.
The pull-back of a projective differential with two simple zeros is a 
differential with only simple zeros, but the pull-back $q$ of a differential
with a double zero is a differential
with $g-1$ double zeros.
By Proposition \ref{boundarycomp2}, such a differential is contained in
a component ${\cal D}$ of a  stratum of differentials with $g-1$ double zeros, 
and it is the locus of a $g-1$-fold
normal crossing of its union with the (connecting) stratum
${\cal Q}=\mathbb{P}{\cal H}(1,1,2,\dots,2)$.

The projective line $\mathbb{C}P^1$ 
of holomorphic one-forms pulled back from $Y$ is transverse at $q$ 
to each of the $d$ local branches through $q$ of the closure of 
${\cal Q}$.  As a consequence, if we fix a
small disk $D$ about $q$ in the fiber $\mathbb{C}P^1$ of the bundle
${\cal P}$, then the homological 
intersection number of $(D,\partial D)$ with each of 
these $d$ local branches, 
that is, the intersection number of 
a deformation of $D$ with fixed boundary, counted with sign and 
multiplicities, equals one. 
As a consequence, the intersection number with
${\cal P}(1)$ of this pulled-back $\mathbb{C}P^1$ equals 
$6d=6g-6$. This 
completes the proof of Theorem \ref{kozo}.  
\end{proof}

\begin{remark}
Theorem \ref{kozo} also shows the following. Let $X$ be a Riemann surface
of genus $g$. Then the intersection of 
$\overline{{\cal P}(1)}$ with the complex projective space 
$\mathbb{C}P^{g-1}$ 
defined as the projectivization of the vector space of holomorphic one-forms on $X$ 
is a complex hypersurface in $\mathbb{C}P^{g-1}$. The degree of this hypersurface 
equals $6g-6$.
\end{remark}

\section{Boundary divisor computation}\label{boundary}

Let ${\cal Q}$ be a component of a stratum of projective
abelian differentials with at least two zeros,
with closure $\overline{\cal Q}$. 
Assume that the differential in ${\cal Q}$ have $d_i$ zeros of
order $k_i$. These zeros
define a multisection of the restriction of
$P^*{\cal C}\vert {\cal Q}=\Pi^{-1}{\cal Q}$ to ${\cal Q}$
whose closure in 
$P^*{\cal C}$ will be denoted by 
$\Delta_{k_i}$.
The goal of this section is to use the hypersurfaces
$\Delta_{k_i}\subset \Pi^{-1}\overline{\cal Q}$
to obtain some information on the second 
cohomology group of $\overline{\cal Q}$.

The following is immediate from Proposition \ref{variety} and 
Proposition \ref{boundarycomp2}.

\begin{lemma}\label{cycle}
$\Delta_{k_i}$ is a codimension one subvariety
of $P^*{\cal C}\vert \overline{\cal Q}$ which is a local complete intersection.
Thus $\Delta_{k_i}$ defines a class in
$H^2(P^*{\cal C}\vert \overline{\cal Q},\mathbb{Q})$. 
\end{lemma}
\begin{proof} By Proposition \ref{variety} and
  Proposition \ref{boundarycomp2},
  $\Delta_{k_i}$ is a codimension one subvariety of the
variety  $P^*{\cal C}\vert \overline{\cal Q}$ which is a local 
complete intersection.
Thus $\Delta_{k_i}$ is a 
Weil divisor in $P^*{\cal C}\vert \overline{\cal Q}$ and hence a Cartier 
divisor since $P^*{\cal C}\vert \overline{\cal Q}$ is a complex variety. 
Via intersection, 
such a divisor defines a class in
$H^2(P^*{\cal C}\vert \overline{\cal Q},\mathbb{Q})$ \cite{Fu84}.
\end{proof}

Denote by $L_{k_i}$ the holomorphic line bundle which defines $\Delta_{k_i}$,
that is, so that $\Delta_{k_i}$ is the zero set of a rational section of
$L_{k_i}$. On the regular subset of $\Delta_{k_i}$, the restriction of 
$L_{k_i}$ coincides with the normal 
bundle of $\Delta_{k_i}$. Since $\Delta_{k_i}\cap 
\Pi^{-1}({\cal Q})$ is 
a holomorphic multi-section of $\Pi^{-1}{\cal Q}\to {\cal Q}$,
the line bundle $L_{k_i}$ has the following properties.
\begin{enumerate}
\item $L_{k_i}\vert \Pi^{-1}\overline{\cal Q}-\Delta_{k_i}$ is trivial.
\item The restriction of $L_{k_i}$ to $\Delta_{k_i}\cap \Pi^{-1}({\cal Q})$ 
coincides with the
restriction of vertical tangent bundle $\nu$. 
%\item The restriction of $L_{k_j}$ to each fiber of $E$ is holomorphic.
\item 
The degree of the restriction of $L_{k_i}$ to a fiber of the bundle 
$\Pi^{-1}\overline{\cal Q}$ equals 
the multiplicity $d_i$ of the zero $k_i$.
%degree of the covering $\Pi\vert \Delta_{k_j}:\Delta_{k_j}\to B$. 
\end{enumerate}

The last property follows from the fact that the restriction of 
the bundle $L_{k_i}$ to a fiber $X$ of
$\Pi^{-1}\overline{\cal Q}$ is the line bundle on $X$ defined
by the effective divisor
$\Delta_{k_i}\cap X$, and the degree of this divisor 
equals $d_i$.

The tautological line bundle over the fibers of ${\cal P}$ 
pulls back via the projection $P$ to a line bundle
$\tau$ on $P^*{\cal C}$. 
%Let $c_1(\kappa^*)$ be its Chern class. 
The following is an easy consequence of Proposition \ref{identify}.

\begin{lemma}\label{dual}
%Assume that either 
%\begin{enumerate}
%\item $B$ is a closed surface or 
%\item  $\phi^*\psi=0$;
%\end{enumerate}
%then 
%The homology class $\delta\in H_{2k}(E,\mathbb{Z})$ is Poincar\'e dual to 
%The line bundle 
$c_1(\nu^*\otimes \tau^*)\vert \Pi^{-1}\overline{\cal Q}=
\sum_i k_i c_1(L_{k_i})$. 
%\otimes_iL_{k_i}^{k_i}\vert \Pi^{-1}(\overline{\cal Q})$. 
%$=\sum_jk_jc_1(L_{k_j})$ on the restriction 
%of $P^*{\cal C}$ to ${\cal Q}$.
%the complex line bundle 
%$\nu^*\otimes \tau^*\to E$ is Poincar\'e dual to $\sum_jk_j[\Delta_{k_j}]$.  
%\item If  then $c_1(\nu^*)$ is Poi
%a homology class $\sum_ja_j[\Delta_{k_j}]$ 
%where $a_j\in \mathbb{Q}$ and $\sum_ja_jd_j=2g-2$.
% of the form
%$L=\nu^*\otimes \zeta$ where $\zeta$ is the pull-back of a line bundle
%on $B$. If $\nu^*$ is trivial on $E-\Delta$ then $L=\nu^*$. 
\end{lemma}
\begin{proof}
  The Weil divisor $\sum_ik_i\Delta_{k_i}$
 is the intersection of the hypersurface 
$\Delta\subset P^*{\cal C}$ with $\Pi^{-1}(\overline{\cal Q})$. 
As the cohomology class dual to a
divisor is defined by intersection \cite{Fu84},
the intersection $\Delta\cap \Pi^{-1}\overline{\cal Q}$, counted
with multiplicites,  
defines the restriction to $\Pi^{-1}\overline{\cal Q}$ 
of the cohomology class dual to $\Delta$.
By Proposition \ref{identify}, this cohomology class is the class
$c_1(\nu^*\otimes \tau^*)$.

On the other hand, since all constructions are
natural, $\sum_ik_i\Delta_{k_i}$ also defines the comology class
$\sum_ik_i c_1(L_{k_i})$. This is what we wanted to show.  
\end{proof}

Let for the moment $X,Y$ be 
arbitrary connected locally path connected 
Hausdorff spaces and let
$\phi:X\to Y$ be an open and closed continuous map. 
The \emph{degree} of $\phi$ is defined as 
\[{\rm deg}(\phi)=\sup\{\sharp \phi^{-1}(y)\mid y\in Y\},\]
and the \emph{local degree} of $\phi$ at $x\in X$ is defined  
as 
\[{\rm deg}(\phi,x)=\inf_U\sup \{\sharp \phi^{-1}\phi(z)\cap U\mid z\in U\},\]
where $U$ ranges over the neighborhoods of $x$.
The following is taken from \cite{Ed76}.

\begin{definition}\label{finitebranched}
An open and closed continuous 
map $\phi:X\to Y$ is a \emph{finite branched covering} if 
${\rm deg}(\phi)<\infty$ and for each $y\in Y$, 
\[{\rm deg}(\phi)=\sum_{x\in \phi^{-1}(y)} {\rm deg}(\phi,x).\]
\end{definition}

The relevance for our purpose is Theorem 2.1 of \cite{Ed76}.

\begin{theorem}[Edmonds \cite{Ed76}]\label{edmonds} 
Let $f:X\to Y$ be a finite branched covering. Then there is a transfer
homomorphism 
\[\tau:H^*(X,\mathbb{Q})\to H^*(Y,\mathbb{Q})\]
such that $\tau\circ  f^*= {\rm deg}(f)\cdot 1$.
\end{theorem}

As in \cite{H20}, we have

\begin{lemma}\label{finitebranchedlem}
  For each $i$ the restriction of the projection $\Pi$ to $\Delta_{k_i}$
  is a finite branched covering.
\end{lemma}
\begin{proof} The restriction of the projection 
$\Pi$ to $\Delta_{k_i}\subset \Pi^{-1}\overline{\cal Q}$ is 
clearly open and closed, and its restriction to 
$\Delta_{k_i}\cap \Pi^{-1}{\cal Q}$ is an unbranched covering of degree $d_i$.
Thus it suffices to observe the following. 
Let $z\in \Delta_{k_i}$; then the local degree of $\phi=\Pi\vert \Delta_{k_i}$
at $z$ equals the multiplicity of $z$ in the divisor supported in 
$\Pi^{-1}(\Pi(z))\cap \Delta_{k_i}$ which defines the restriction of the line 
bundle $L_{k_i}$ to $\Pi^{-1}(\Pi(z))$.

To this end choose  a sequence of points
$z_j\in \Delta_{k_i}\cap \Pi^{-1}{\cal Q}$ such that
  $z_j\to z$ and hence $q_j=\Pi(z_j)\to q=\Pi(z)$.

  For each $j$ the intersection $\Delta_{k_i}\cap 
  \Pi^{-1}(q_j)$ is an effective divisor $D_j$ 
  of degree $d_i$ where
  $d_i$ is the multiplicity of the zero of order $k_i$ for $q_j$. By the
  definition of the topology on ${\cal P}$, as $i\to \infty$ 
  these divisors converge to an effective divisor $D$  
  of degree $d_i$ containing $z$. 
  If the multiplicity of $z$ in $D$ equals
$m\geq 1$ then it follows as in Lemma 3.2 of \cite{H20} that the local degree of
$\Pi\vert \Delta_{k_i}$ at $z$ equals $m$. 
This implies the lemma.
\end{proof}
  
By Lemma \ref{finitebranchedlem} and Theorem \ref{edmonds}, 
there is a transfer map
\[H^*(\Delta_{k_i},\mathbb{Q})\to 
H^*({\cal P},\mathbb{Q}).\] Thus 
we can define 
%following the construction of the tautological cohomology
%ring of the moduli space of curves, define the
a cohomology class 
\[\kappa^{k_i}\in H^{2}(\overline{\cal Q},\mathbb{Q})\]
as the image of $c_1(L_{k_i})\vert \Delta_{k_i}$ under the 
transfer map $H^*(\Delta_{k_i},\mathbb{Q})\to
H^*(\overline{\cal Q},\mathbb{Q})$.
In other words, if $A$ is cycle in
$\overline{\cal Q}$
which defines a homology class $[A]\in H_2(\overline{\cal Q},\mathbb{Q})$,
then $\kappa^{k_i}[A]$ is the evaluation of
$c_1(L_{k_i})$ on $\Pi^{-1}(A)\cap \Delta_{k_i}$ as 
specified in Theorem \ref{edmonds}.

Denote by 
${\cal D}_{i,j}^c$ the closure of a connected component of the 
 boundary divisor
${\cal D}_{i,j}$ of ${\cal Q}$ which is obtained by colliding two zeros 
of not necessarily distinct order $k_i,k_j$. Recall to this end  
from \cite{KtZ03} that a stratum may have several connected components, and 
the boundary of a component of a stratum may contain
more than one of these components of codimension one.
By Proposition \ref{addboundarycomponent}, 
if differentials in ${\cal Q}$ do not have a zero of order $k_i+k_j$, then 
differential in ${\cal D}_{i,j}^c$ contain a single zero of order $k_i+k_j$, and 
the boundary component ${\cal D}_{i,j}^c$ of ${\cal Q}$ 
is a smooth suborbifold of ${\cal Q}\cup {\cal D}$. Otherwise it is
the locus of a normal crossing singularity.
Thus the closure of such a boundary component
is a divisor in $\overline{\cal Q}$ which 
defines a dual line bundle on $\overline{\cal Q}$. Our goal
is to compute the Chern class of this line bundle
using the classes
$\kappa^{k_\ell}$. 

We begin with 
computing the normal bundle of $\Delta_{k_\ell}$ in 
${\cal Q}\cup {\cal D}_{i,j}^c$.
%Let as before $\nu$ be the vertical
%tangent bundle of $P^*{\cal C}$.
The following is similar to Lemma \ref{intersection1}.

\begin{lemma}\label{normalbundle}
Assume that the multiplicity of the zero of order $k_i+k_j$ in 
${\cal D}_{i,j}^c$ equals one. Then 
  $\Delta_{k_\ell}$ is a smooth suborbifold of
  $\Pi^{-1}({\cal Q}\cup {\cal D}_{i,j}^c)$.
\begin{enumerate}
\item If $i=j$ and $\ell=i$ then the 
normal bundle of $\Delta_{k_i}$ equals 
$(\nu\vert \Delta_{k_i})\otimes N$ where $N$ is the line
bundle on $\Delta_{k_i}$ defined by the divisor which equals 
the locus of the zeros of order $2k_i$ of the points in 
${\cal D}_{i,i}^c$.
\item If $i=j$ and $\ell\not=i$ or if $i\not=j$ then the normal 
 bundle of $\Delta_{k_\ell}$  
 equals the restriction of 
$\nu$. 
\end{enumerate}
\end{lemma}
\begin{proof}
Recall that for all $\ell$, 
  the normal bundle of the intersection 
$\Delta_{k_\ell}\cap \Pi^{-1}({\cal Q})$ equals the bundle $\nu$. 

Now if $i=j$ then by Proposition \ref{boundarycomp2}, the intersection 
$\Delta_{k_i}\cap \Pi^{-1}({\cal Q}\cup {\cal D}_{i,j}^c)$
is tangent to the fibers of the bundle $P^*{\cal C}$ 
at the zeros of order $2k_i$ in $\Delta_{k_i}$. The locus of these
zeros equals the branch locus
of the projection $\Delta_{k_i}\to {\cal Q}\cup {\cal D}_{i,j}^c$, and it   
is a subvariety of $\Delta_{k_i}$ of codimension one. Furthermore, the projection
$\Pi\vert \Delta_{k_i}$ is doubly branched along its branch locus. 
Therefore the normal bundle of $\Delta_{k_i}$ equals the tensor product of
the restriction of the vertical tangent bundle
$\nu$ with the line bundle which is 
dual to this locus of tangency. We refer to the proof of 
Lemma \ref{intersection1} for more details on this well known fact, with
a slightly different but equivalent viewpoint.

If $i=j$ and $\ell\not=i$ then the restriction of $\Pi$ to
$\Delta_\ell\cap \Pi^{-1}({\cal Q}\cup {\cal D}_{i,j}^c)$
is an unbranched covering and hence the normal bundle of 
$\Delta_\ell\cap \Pi^{-1}({\cal Q}\cup {\cal D}_{i,j}^c)$
coincides with the vertical tangent bundle $\nu$.

If $i\not=j$ then for each $\ell$ the intersection
$\Delta_{k_\ell}\cap \Pi^{-1}({\cal Q}\cup {\cal D}_{i,j}^c)$
is smooth, with normal bundle $\nu$.
Note however that the hypersurfaces
$\Delta_{k_i}$ and $\Delta_{k_j}$ intersect 
along the zeros of order $k_i+k_j$.
\end{proof}

To keep notations transparent, from now
on we denote by
$[A]$ the second cohomology class 
defined by a Cartier divisor $A$. In particular, 
for a component 
${\cal Q}$ of a stratum, with zeros of order $k_i,k_j$,
we obtain a class
$[{\cal D}_{i,j}^c]\in H^2({\cal Q}\cup {\cal D}_{i,i}^c, \mathbb{Q})$
defined by a boundary component ${\cal D}_{i,j}^c$ of ${\cal Q}$
which is obtained 
by merging a zero of order $k_i$ with a zero of order $k_j$. 
Let as before $\tau$ be the pull-back of the tautological bundle 
on ${\cal P}$ to $P^*{\cal C}$. The following statement is the main
technical result required for the proof of Theorem \ref{stratificationthm}, and
it is of independent interest. 

\begin{proposition}\label{tauto}
%\begin{enumerate}
%\item 
%For all $i,j,\ell$ the class 
%$\kappa^{k_\ell}\vert {\cal Q}\cup {\cal D}_{i,j}^c$
%is contained in the subgroup 
%of $H^2({\cal Q}\cup {\cal D}_{i,j}^c, \mathbb{Q})$ 
%spanned by the restrictions of $P^*\kappa_1$ and $\eta$.
%\item 
For all $i,j$ the cohomology class $[{\cal D}_{i,j}^c]$ is contained in the subgroup
of $H^2({\cal Q}\cup {\cal D}_{i,j}^c, \mathbb{Q})$ 
spanned by the restrictions of $P^*\kappa_1$ and $\eta$.
%\end{enumerate}
%and the same holds
%true for the class of $[{\cal D}_{i,j}^c]$.
\end{proposition}  
\begin{proof}
  By Proposition \ref{addboundarycomponent},
if differentials in 
${\cal D}_{i,j}^c$ have a single zero of order $k_i+k_j$, then 
${\cal Q}\cup {\cal D}_{i,j}^c$ is a smooth complex orbifold, and otherwise
${\cal Q}\cup {\cal D}_{i,j}^c$  has a normal
crossing singularity along ${\cal D}_{i,j}^c$.
As a consequence, there is a well defined desingularization of
${\cal Q}\cup {\cal D}_{i,j}^c$ which is a smooth complex orbifold.
This desingularization contains $d\geq 1$ copies of
${\cal D}_{i,j}^c$ where $d$ is the multiplicity of
the zero of order $k_i+k_j$ in ${\cal D}_{i,j}^c$, and
${\cal Q}\cup {\cal D}_{i,j}^c$ is obtained from this
desingularization by identifying these $d$ copies
of ${\cal D}_{i,j}^c$.

As a consequence, for a smooth closed surface $B$ 
we can talk about a
smooth map $\phi:B\to {\cal Q}\cup {\cal D}_{i,j}^c$ 
which intersects ${\cal D}_{i,j}^c$ transversely.
Such a smooth map is the projection of a smooth map
of $B$ into the desingularization of ${\cal Q}\cup {\cal D}_{i,j}^c$
which intersects the preimage of the hypersurface
${\cal D}_{i,j}^c$ transversely in finitely many points.
Each of these intersection points then descends to
an intersection point of $\phi(B)$ with ${\cal D}_{i,j}^c$. 

  Consider the
  pull-back $\Pi^E:E=\phi^*P^*{\cal C}\to B$ of the universal
  curve to $B$.  This is 
  a smooth fiber bundle over $B$. 
Denote by $\Delta_{k_\ell}^E$ the pull-back of $\Delta_{k_\ell}$ to $E$. Since
$\phi(B)$ intersects ${\cal D}_{i,j}^c$ transversely in finitely many points, 
the restriction of the projection $\Pi_E:E\to B$ to $\Delta^E_{k_\ell}$ is a
branched multi-section, and $\Delta^E_{k_\ell}\subset E$ is a cycle which defines
a homology class $\delta_\ell=\delta_{k_\ell}\in H_2(E,\mathbb{Q})$.

Write $\delta=\sum_i k_i\delta_{k_i}$. 
% On the other hand, if we write $\delta=\sum_\ell k_\ell\delta_\ell$, then 
In view of the fact that $c_1(\nu^*)\cup c_1(\nu^*)[E]=P^*\kappa_1(\phi_*[B])$, 
  Proposition \ref{identify} and naturality with respect to
  pull-back implies that 
  \begin{align}\label{deltadot}
  \delta\cdot \delta &=c_1(\nu^*\otimes \tau^*)(\delta)=
 (c_1(\nu^*)-c_1(\tau))\cup (c_1(\nu^*)-c_1(\tau))[E]\\
  &=
 P^*\kappa_1(\phi_*[B])-2(2g-2)\eta(\phi_*[B]).\notag \end{align} 
Recall that $c_1(\nu^*)\cup c_1(\tau)[E]=(2g-2)\eta(\phi_*[B])$. 

For the proof of the proposition, we analyze the evaluation of the 
the cohomology classes in $H^2(E,\mathbb{Q})$ which are Poincare
dual to the classes $\delta_\ell$. 
%the classes $\kappa_{k_\ell}$ on $\phi_*[B]$.
To this end we distinguish three cases.

{\sl Case 1:} $\ell\not=i,j$.

  %Let $[B]\in H_2(B,\mathbb{Q})$ be the fundamental
  %class of $B$. 
  By Lemma \ref{trivial}, the restriction of the 
  bundle $\nu^{\otimes (k_\ell+1)}$ to $\Delta_{k_\ell}\cap \Pi^{-1}{\cal Q}$ coincides with
  the restriction of the bundle $\tau^*$. 
  We claim that this holds true on $\Delta_{k_\ell}\cap \Pi^{-1}({\cal Q}\cup 
  {\cal D}_{i,j}^c)$. To this end 
  note that since $\ell\not=i,j$ by assumption, 
  the local computation carried out in Lemma \ref{trivial}
  is valid as well for preimages of points in ${\cal D}_{i,j}^c$.

By statement (2) of Lemma \ref{normalbundle}, 
in this case 
  $\Delta_{k_\ell}^E$ is a smooth 
  multisection of $E$, and its normal bundle can be identified
  with the restriction of the vertical tangent bundle $\nu$ of $E$.
  %Let $\delta_\ell=\delta_{k_\ell}\in H_2(E,\mathbb{Q})$
  %be its homology class. 
  Since the line bundle $L_{k_\ell}$ on $P^*\overline{\cal Q}$ is
  dual to $\Delta_{k_\ell}$ in the sense of intersections, by naturality 
  of Chern classes under pull-back
  we conclude that 
  the class $\delta_\ell$ is Poincar\'e dual
  to the Chern class $\phi^*c_1(L_{k_\ell})$ of
  $\phi^*(L_{k_\ell})$.

 As the restriction of the projection $\Pi^E$ to $\Delta^E_{k_\ell}$ is 
  an unbranched covering of degree $d_\ell$ where $d_\ell$ is the multiplicity of the zero of 
  order $k_\ell$ in ${\cal Q}$, 
 using the transfer map in cohomology for this covering map $\Delta^E_{k_\ell}\to B$ 
   we obtain
  \begin{align}\label{kell}
  \kappa^{k_\ell}(\phi_*[B])&=\phi^*c_1(L_{k_\ell})(\delta_\ell)=
  \delta_\ell \cdot \delta_\ell \notag\\
  &=c_1(\nu)(\delta_\ell)=\frac{-1}{k_\ell +1}\phi^*c_1(\tau)(\delta_\ell)=\frac{-d_\ell}{k_\ell +1}\eta(\phi_*[B]).
  \end{align}
  %since the vertical tangent bundle $\nu$ of $E$ is the normal bundle of the multi-section
  %$\Delta_{k_\ell}$ and 
    
  {\sl Case 2:} $i=j=\ell$.

 By construction, in this case we have 
  $\Delta^E_{k_q}\cap \Delta^E_{k_u}=\emptyset$ for $q\not=u$. Thus we 
  obtain from equation (\ref{deltadot}),  
  from Lemma \ref{dual} and from equation (\ref{kell})
  the equation
  \begin{equation}\label{ki2}
  (P^*\kappa_1-2(2g-2)\eta)(\phi_*[B])
   =\sum_{\ell\not=i}\frac{-k_\ell d_\ell}{k_\ell+1}
  \eta(\phi_*[B])+k_i\kappa^{k_i}(\phi_*[B]).\end{equation}
Solving for $\kappa^{k_i}(\phi_*[B])$ shows that 
\begin{equation}\label{ki3}
\kappa^{k_i}(\phi_*[B])=\frac{1}{k_i}\bigl(P^*\kappa_1(\phi_*[B])
+(\sum_{\ell\not=i}\frac{k_\ell d_\ell}{k_\ell+1}-2(2g-2))\eta(\phi_*[B])\bigr)\end{equation}
and hence 
the restriction of the class $\kappa^{k_i}$ to ${\cal Q}\cup
{\cal D}_{i,j}^c$ is contained in
the subgroup generated
by $P^*\kappa_1$ and $\eta$. 

To show that $[{\cal D}_{i,i}^c]$ also is contained in the subgroup generated by 
$P^*\kappa_1$ and $\eta$, recall from 
 Proposition \ref{boundarycomp2} that 
 the pull-back of $\Delta_{k_i}$ to $E$ is a branched multisection of 
$E$, with a single branch point in each fiber of $E$ over the points 
$x_u\in B$ with $\phi(x_u)\in {\cal D}_{i,i}^c$,   
  and each of these branch points is a zero of order 
  $2k_i$ of the differential $\phi(x_u)$.
 Thus  by Lemma \ref{normalbundle} and
 Lemma \ref{trivial}, we deduce as in the proof of
 Lemma \ref{intersection1} that  
  \begin{equation}\label{ki}
 \kappa^{k_i}(\phi[B])=c_1(L_{k_i})(\delta_i)=\delta_i\cdot \delta_i=
  c_1(\nu)(\delta_i)+b\end{equation}
 % =-\frac{1}{k_j+1}\Pi^*\eta(\delta)+b$ 
  where $b$ is the 
  number of intersection points between $\phi(B)$ and ${\cal D}_{i,i}^c$,
  counted with sign and multiplicities. 

On the other hand, we have
\begin{equation}\label{kappa10}
P^*\kappa_1(\phi_*[B])=\sum_j k_jc_1(\nu^*)(\delta_j)+2(2g-2)\eta(\phi_*[B]) 
\end{equation}
and therefore as in Section \ref{signatureasintersection}, we conclude that
\begin{equation}\label{kappa11}
-c_1(\nu)(\delta_i)=c_1(\nu^*)(\delta_i)=
\frac{1}{k_i} \bigl(P^*\kappa_1\phi_*[B]-\sum_{u\not=i}c_1(\nu^*)(\delta_u)-
2(2g-2)\eta(\phi_*[B]) \bigr).
\end{equation}

Since by equation (\ref{kell}) in Case 1 above, for all $u\not=i$ the value
$c_1(\nu)(\delta_u)$ is a multiple of $\eta(\phi_*[B])$, we conclude from equations
(\ref{ki},\ref{kappa11}) and the fact that the class $\kappa^{k_i}$ is a linear combination of 
the restriction to $\overline{\cal Q}$ of the 
classes $P^*\kappa_1$ and $\eta$ that the same holds true for the class
$[{\cal D}_{i,i}^c]$.  This completes the proof of the proposition in the case $k_i=k_j$. 

{\sl Case 3:} $\ell=i\not=j$. 

    Let $x\in B$ be such that $\phi(x)\in {\cal D}_{i,j}^c$.
  By modifying $\phi$ with an isotopy, we may assume that for some complex structure on $B$
  (whose orientation may be opposite to the orientation of $B$ in the case that the intersection index
  of $\phi$ is negative),  the map
 $\phi$ is a holomorphic embedding near $x$, and that
 the intersection of $\phi(B)$ with 
 ${\cal D}_{i,j}^c$ is transverse at $\phi(x)$.  The pull-back 
  $\Delta_{k_i}^E$ of $\Delta_{k_i}$ to $E$ contains a point $p_0$ in the fiber
  of $E$ over $x$ which is a zero of the differential $\phi(x)\in {\cal D}_{i,j}^c$ of degree
  $k_i+k_j$. Furthermore, 
 it follows from the 
  discussion in the proof of Proposition \ref{boundarycomp2} that 
  there are holomorphic local coordinates $(z,y)$ 
  for $E$ near $p_0=(0,0)$ whose range is a polydisk 
  $\{(z,y)\in \mathbb{C}^2\mid \vert z\vert <\epsilon, \vert y\vert <\epsilon\}$,
and  with the following properties.
  \begin{enumerate}
  \item In these coordinates, the projection $\Pi^E:E\to B$ is the second factor projection $(z,y)\to y$.
  \item  On the disk $\{(z,y)\mid \vert z\vert <\epsilon\}$, the differential $\phi(y)=\phi(\Pi^E(z,y))$ 
  is the projectivization of the 
  differential $\Phi(y)=(z-y)^{k_i}(z+y)^{k_j}dz$.
    \end{enumerate}
  
Note that $(z,y)\to \Phi(y)(z)$ defines a section of the pull-back of the tautological bundle $\tau$ on 
${\cal P}$ over the domain of the coordinates $(z,y)$. Moreover,
in these coordinates, the locus $\Delta^E_{k_i}$ is the diagonal
$D=\{(y,y)\mid \vert y\vert < \epsilon\}$.  
The normal bundle of this diagonal is spanned by the restriction to $D$ of the holomorphic
vector field  
$\frac{\partial}{\partial z}-\frac{\partial}{\partial y}$, and the section
$\Psi(z,y)=\frac{1}{(z-y)^{k_i}}(\frac{\partial}{\partial z}-\frac{\partial}{\partial y})$ 
of the holomorphic tangent bundle of the polydisk
is meromorphic, with a pole of order $k_i$ along $D$. 

Now $(z,y)\to \Phi(y)(z)$ also can be viewed as a local holomorphic section of the vertical cotangent bundle. 
Pairing this section $\Phi$ with the meromorphic vector field $\Psi$ 
defines an isomorphism between the restriction of the line bundle 
$\tau$ to the punctured disk $D-p_0\subset \Delta_{k_i}$ 
and the $k_i+1$-th power of the conormal bundle of $D-p_0$. 
This isomorphism is the restriction of the  
isomorphism constructed in 
the proof of Lemma \ref{trivial}. Since for 
$(y,y)\in D$ we have $\Phi(\Psi(y,y))=(2y)^{k_j}$, the rotation number
of the image of this isomorphism with respect to a section of the vertical cotangent
bundle which extends across $p_0$ 
equals $k_j$.

Using this analysis for all intersection points of $\phi(B)$ with ${\cal D}_{i,j}^c$, 
we conclude that the restriction of the bundle $\tau$ to $\Delta_{k_i}^E$ can be identified
with the bundle $(\nu^*)^{\otimes (k_i+1)} \otimes \xi^{k_j}$ where $\xi$ is the bundle with divisor
the zeros of order $k_i+k_j$ for the differentials in $\phi(B)$. 

As a consequence, we have 
\begin{equation}\label{kj1}
\kappa^{k_i}(\phi_*[B])=\delta_{k_i}\cdot \delta_{k_i}=c_1(\nu)(\delta_{k_i})=\frac{1}{k_i+1}(-d_i\eta(\phi_*[B])+k_j b)\end{equation}
where $b$ is the number of intersections of $\phi_*(B)$ and ${\cal D}_{i,j}^c$, counted with
sign and multiplicity, and where $d_i$ is the multiplicity of the zero of order $k_i$.
Similarly, the same equation also holds true if we replace $k_i$ by $k_j$. 
%we conclude that 
%\begin{equation}\label{kj2}\kappa^{k_j}(\phi_*[B])=\delta_{k_j}\cdot \delta_{k_j}
%=c_1(\nu)(\delta_{k_j})=\frac{1}{k_j+1}(-\eta(\phi_*[B])-k_ib).\end{equation}

As $\sum_\ell k_\ell\kappa^{k_\ell}(\phi_*[B])=(P^*\kappa_1-(2g-2)\eta)(\phi_*[B])$, from Case 1 above
we infer that 
\begin{equation}\label{rightside}
(k_i\kappa^{k_i}+k_j\kappa^{k_j})(\phi_*[B])=(P^*\kappa_1+(\sum_{\ell\not= i,j}\frac{k_\ell d_\ell}{k_\ell +1}-(2g-2))\eta)
(\phi_*[B])\end{equation} 
and hence $k_i\kappa^{k_i}+k_j\kappa^{k_j}$ is contained in the subgroup of 
$H^2(\overline{\cal Q},\mathbb{Q})$ generated by $P^*\kappa_1$ and $\eta$.

On the other hand, by equation (\ref{kj1}) we know that 
\begin{align}
& k_ik_j (\frac{1}{k_i+1}+\frac{1}{k_j+1})b\notag\\
& =
(k_i\kappa^{k_j}+k_j\kappa^{k_j})(\phi_*[B])+(\frac{k_id_i}{k_i+1}
+\frac{k_jd_j}{k_j+1})\eta(\phi_*[B]).\notag\end{align}

This yields that indeed, the intersection number $b$ with 
${\cal D}_{i,j}^c$ is a rational linear combination of  $P^*\kappa_1$ and $\eta$.
This completes the proof of the proposition.
\end{proof}

\begin{remark}\label{componentcomp} 
The proof of Proposition \ref{tauto} also shows the following.
Let ${\cal D}^1,{\cal D}^2$ be two boundary components of codimension one of 
a stratum ${\cal Q}$. Let us assume that ${\cal D}^1,{\cal D}^2$ are distinct components
of the same stratum of projective abelian differentials. The cohomology class defined
by ${\cal D}^i$ can be represented in the form $aP^*\kappa_1+b\eta$ for both $i=1,2$, that
is, it is the same linear combination of the classes $P^*\kappa_1$ and $\eta$. 
\end{remark}

\begin{remark}
The computations in the proof of Proposition \ref{tauto} can be used to establish
an explicit formula for the classes of the boundary divisors 
${\cal D}_{i,j}^c$ in $H^2({\cal Q}\cup {\cal D}_{i,j}^c,\mathbb{Q})$. As these formulas
are rather involved and we do not know any interesting application, we omit
this discussion.
\end{remark}

\section{A stratification of the spin moduli space}\label{stratification}

The goal of this section is to prove Theorem \ref{stratificationthm}. 
We begin with the following well known

\begin{lemma}\label{nocompletecurve}
Let ${\cal V}\subset {\cal M}_g$ 
be any subvariety. If $\kappa_1=0$ on ${\cal V}$ then 
${\cal V}$ does not contain any complete complex subvariety.
\end{lemma}
\begin{proof}
We evoke the following result of Wolpert \cite{W86}:
There exists a holomorphic line bundle $L$ on ${\cal M}_{g}$ with 
Chern class $\kappa_1$, and there is a Hermitian metric 
on $L$ with curvature form $\omega=\frac{1}{2\pi^2}\omega_{WP}$ where
$\omega_{WP}$ is 
the Weil Petersson K\"ahler form on ${\cal M}_{g}$. In particular, $\omega$ is 
positive. As a consequence, 
if $V$ is a compact complex variety of dimension $k\geq 1$ 
and if $\zeta:V\to {\cal M}_{g}$ is a
holomorphic map which does not factor through a map from a variety of smaller dimension, then 
\begin{equation}\label{kappa1}
\kappa_1^k(\zeta(V))=\int_V(\zeta^*\omega)^k>0.\end{equation}
This shows that if ${\cal V}\subset {\cal M}_g$ is a complex subvariety which contains
a complete complex subvariety, then 
$\kappa_1\not=0$ on ${\cal V}$.
\end{proof}

The following is the main result of \cite{G20}. Its proof is completely elementary.

\begin{theorem}[Gendron \cite{G20}]\label{gendron}
A stratum of abelian differentials does not contains a nontrivial complete
complex subvariety.
\end{theorem}

Let ${\cal M}_{g,{\rm odd}}$ be the finite orbifold cover of ${\cal M}_g$
which is the moduli space of curves with 
odd theta characteristic. By definition, this is the quotient of Teichm\"uller space
by the finite index subgroup of the mapping class group 
${\rm Mod}(S_g)$ which preserves an
\emph{odd spin structure} on the surface 
$S_g$ of genus $g$. Such an odd spin structure is defined
as a quadratic form on $H_2(S_g,\mathbb{Z}/2\mathbb{Z})$ with odd 
Arf invariant (see \cite{KtZ03} for more information). 
Each of the curves $X\in {\cal M}_{g,{\rm odd}}$ admits an odd 
\emph{theta characteristic}, which by definition is a holomorphic 
line bundle $L$ whose square equals the canonical bundle of $X$
and such that
$h^0(X,L)$ is odd. The square of a holomorphic section of $L$ is a holomorphic
one-form on $X$ with all zeros of even multiplicity. 

All bundles over ${\cal M}_g$ will be pulled back to 
${\cal M}_{g,{\rm odd}}$ and will be denoted by the same symbols. 
Let $\overline{\cal Q}$ be the closure in ${\cal P}$ of the stratum 
${\cal Q}=\mathbb{P}{\cal H}(2,\dots,2)^{\rm odd}$ 
of projective
abelian differentials with all zeros of order two and odd spin structure.
Then the restriction of the projection 
$P:{\cal P}\to {\cal M}_{g,{\rm odd}}$ to $\overline{\cal Q}$ is surjective.

Recall that $\overline{\cal Q}$ admits a stratification of depth $g-1$
into subspaces ${\cal Q}_j$ of codimension $j-1$.
Here ${\cal Q}_j$ is the union of all components of strata
in $\overline{\cal Q}$ of codimension $j-1$. In particular, we have
${\cal Q}_1={\cal Q}$
and ${\cal Q}_{g-1}$ is the union of those components of $\mathbb{P}{\cal H}(2g-2)$ with an
odd spin structure \cite{KtZ03}.

For $r\geq 2$ let
\[{\cal M}_{g,{\rm odd}}^r=\{(X,L)\in 
  {\cal M}_{g,{\rm odd}}\mid h^0(X,L)\geq r+1\}.\]

The first part of the following statement is Clifford's theorem (for $g\leq 4$), the second
and third parts are due to 
Teixidor i Bigas \cite{TiB87}, in particular Theorem 2.13 in that article.

\begin{theorem}[Clifford, Teixidor i Bigas \cite{TiB87}]\label{clifford}
\begin{enumerate}\item
For $g\leq 4$, the locus 
  ${\cal M}_{g,{\rm odd}}^r$ is empty for 
  all $r\geq 2$.
 \item  For $g\geq 5$ the locus ${\cal M}_{g,{\rm odd}}^2$ has pure codimension 3 in
  ${\cal M}_{g,{\rm odd}}$.
\item Any component of ${\cal M}_{g,{\rm odd}}^r$ has dimension at
  most $3g-2r-2$.
\end{enumerate}
\end{theorem}

% For $g\geq 6$, 
% a generic point of any component of ${\cal M}_{g,{\rm odd}}^2$ is a curve
%  which has only one half-canonical series of dimension 2. 
 
  To avoid technical difficulties we occasionally pass
  to a finite orbifold cover $\hat {\cal M}$ of 
${\cal M}_{g,{\rm odd}}$ which is a complex manifold. Then strata of abelian differentials
over $\hat {\cal M}$ are complex manifolds as well. The properties we are interested
in do not change by this modification. By abuse of notion, we still work with 
${\cal M}_{g,{\rm odd}}$, adopting the convention that whenever we talk 
about smooth complex orbifolds, by which we mean the quotient of a 
smooth complex manifold by a finite group of biholomorphic automorphisms.

Recall from \cite{KtZ03} that a \emph{hyperelliptic component}
of (projective) abelian differentials consists of differentials
on hyperelliptic curves which are invariant under the
hyperelliptic involution.
There are two such components in each genus $g\geq 3$, the components
$\mathbb{P}{\cal H}(g-1,g-1)^{\rm hyp}$ and
$\mathbb{P}{\cal H}(2g-2)^{\rm hyp}$. The projection $P$ maps each of these
components onto the locus ${\rm Hyp}$ of hyperelliptic curves in ${\cal M}_g$.

By \cite{KtZ03}, for $g\geq 4$ the stratum 
$\mathbb{P}{\cal H}(2g-2)$ has three connected components. There is an odd
non-hyperelliptic component $\mathbb{P}{\cal H}(2g-2)^{\rm odd}$,
the hyperelliptic 
component $\mathbb{P}{\cal H}(2g-2)^{\rm hyp}$ 
and an even component $\mathbb{P}{\cal H}(2g-2)^{\rm even}$.
The parity of the hyperelliptic component $\mathbb{P}{\cal H}(2g-2)^{\rm hyp}$
is odd if and only if $g\equiv 1,2$ mod $4$.
In the case $g=3$, the even component coincides with the
hyperellipitic component. To keep notations uniform, we put $\mathbb{P}{\cal H}(4)^{\rm even}=
\emptyset$.
We have

\begin{lemma}\label{project}
The image of the projection 
$P:\mathbb{P}{\cal H}(2g-2)^{\rm odd}\cup
\mathbb{P}{\cal H}(2g-2)^{\rm even}\to {\cal M}_g$ is disjoint from the hyperelliptic locus. 
\end{lemma}
\begin{proof}
Let $q\in \mathbb{P}{\cal H}(2g-2)-\mathbb{P}{\cal H}(2g-2)^{\rm hyp}$ 
be a projective abelian differential with a single zero on a Riemann surface $X$ 
which is not contained in the hyperelliptic component of
$\mathbb{P}{\cal H}(2g-2)$.
The zero of the projective differential $q$ 
is a Weierstrass point on $X$, and $q$ is uniquely determined
by this Weierstrass point. 

If $X$ is a hyperelliptic surface,
then as Weierstrass points are fixed by the hyperelliptic 
involution, the projective differential $q$ is invariant under the hyperelliptic involution. 
But this implies that $q$ is contained in the hyperelliptic component of 
$\mathbb{P}{\cal H}(2g-2)$, a contradiction.
\end{proof}

\begin{example}\label{genus3}
 If $g=3$ 
 then the closure $\overline{\cal Q}$ of
  ${\cal Q}=\mathbb{P}{\cal H}(2,2)^{\rm odd}$ in ${\cal P}$ 
  consists precisely
  of squares of projective sections of an odd theta
  characteristic. By the first part of Theorem \ref{clifford},
  this implies that the restriction of the
  projection
$P:{\cal P}\to {\cal M}_{g,{\rm odd}}$ to 
$\overline{\cal Q}$ 
is a biholomorphism. Since the spin structure of
the hyperelliptic component of ${\cal H}(4)$ is even \cite{KtZ03},  
we have $\overline{\cal Q}=\mathbb{P}{\cal H}(2,2)^{\rm odd}\cup\mathbb{P}
{\cal H}(4)^{\rm odd}$.

As the restriction of the projection $P$ to $\overline{\cal Q}$ is
a biholomorphism, it induces an isomorphism in cohomology. 
Now $H^2({\cal M}_{3,{\rm odd}},\mathbb{Q})=\mathbb{Q}$ is generated
by $\kappa_1$ 
\cite{H83,RW14}. Since
$P\mathbb{P}{\cal H}(4)^{\rm odd}$ is a divisor in ${\cal M}_{3,{\rm odd}}$
and hence defines a  second cohomology 
class $\xi\in H^2({\cal M}_{3,{\rm odd}},\mathbb{Q})$,
this class then is a multiple
of $\kappa_1$. Now a line bundle defined by a divisor is trivial on the complement
of the divisor, the restriction of $\kappa_1$ to $P{\cal H}(2,2)^{\rm odd}$ vanishes.

On the other hand, by Lemma \ref{project}, the 
divisor $P\mathbb{P}{\cal H}(4)^{\rm odd}$ is disjoint from the
hyperelliptic locus
${\rm Hyp}=P\mathbb{P}{\cal H}(4)^{\rm hyp}$
which is a divisor in ${\cal M}_{3,{\rm odd}}$. This divisor also defines
a multiple of $\kappa_1$. In other words, the Chern class of the line
bundle defined by the divisor
${\rm Hyp}$ is a multiple of $\kappa_1$. As the line bundle dual to a divisor
is trivial on the complement of the divisor, we conclude that
the restriction of 
$\kappa_1$ to $P\mathbb{P}{\cal H}(4)^{\rm odd}$ vanishes.
%and the same holds true for 
%the restriction of $\kappa_1$ to
%${\cal M}_{3,{\rm odd}}-P\mathbb{P}{\cal H}(4)^{\rm odd}$. 
%\[\xi=\kappa_{1,1}+\frac{2}{3}\eta\] on $\overline{\cal Q}$
%by Proposition \ref{dual3}. 

As a consequence, ${\cal M}_{3,{\rm odd}}$ is stratified into two 
strata, namely the stratum ${\cal M}_{3,{\rm odd}}-P\mathbb{P}{\cal H}(4)^{\rm odd}$ and 
the stratum $P\mathbb{P}{\cal H}(4)^{\rm odd}$, and the restriction of $\kappa_1$ to each of these strata
vanishes. In particular, these strata  do not contain a complete subvariety. 
Together we obtain Theorem \ref{stratificationthm} in the case $g=3$.
The article \cite{FL08} contains a stronger result. 
%We will use an argument along this 
%line for the completion of the proof of Theorem \ref{stratificationthm}. 
\end{example}

\begin{example}\label{genus4}
For $g=4$,  
Clifford's theorem shows that the restriction of the projection
$P:{\cal P}\to {\cal M}_{4,{\rm odd}}$ to 
the closure $\overline{\cal Q}$ 
of ${\cal Q}=\mathbb{P}{\cal H}(2,2,2)^{\rm odd}$ 
is a biholomorphism. 
Since the spin structure of the hyperelliptic component of 
$\mathbb{P}{\cal H}(6)^{\rm hyp}$ is even, 
we have
$\overline{\cal Q}= \mathbb{P}{\cal H}(2,2,2)^{\rm odd}\cup 
\mathbb{P}{\cal H}(2,4)^{\rm odd}\cup \mathbb{P}{\cal H}(6)^{\rm odd}$.

Since $H^2({\cal M}_{4,{\rm odd}},\mathbb{Q})=\mathbb{Q}$, we know that 
$P\mathbb{P}{\cal H}(2,4)^{\rm odd}$ is dual to a multiple of $\kappa_1$. 
This also follows from Proposition \ref{tauto}. Namely, 
as $P\vert \overline{\cal Q}$ is a biholomorphism and the class 
of the boundary divisor $\overline{\mathbb{P}{\cal H}(2,4)^{\rm odd}}$ 
in $\overline{\cal Q}$ is a rational linear combination of the class $\eta$ and 
$P^*\kappa_1$,  
the class of the divisor $P\mathbb{P}{\cal H}(2,4)^{\rm odd}$ is a multiple
of $\kappa_1$. In particular, the class $\kappa_1$ vanishes on 
${\cal M}_{g,{\rm odd}}-P\mathbb{P}{\cal H}(2,4)^{\rm odd}$. 

Similarly, by Proposition \ref{tauto}, the class of the boundary divisor
 $\mathbb{P}{\cal H}(6)^{\rm odd}$ in 
 $\overline{\mathbb{P}{\cal H}(2,4)^{\rm odd}}$ is a rational linear combination of 
 the class $P^*\kappa_1$ and $\eta$. Thus as before, the class of the 
 divisor $P\mathbb{P}{\cal H}(6)^{\rm odd}\subset 
 \overline{P\mathbb{P}{\cal H}(2,4)^{\rm odd}}$ 
 is a rational multiple of $\kappa_1$,
 and the restriction of $\kappa_1$ to $P\mathbb{P}{\cal H}(2,4)^{\rm odd}$
 vanishes. 
 
 This discussion can not be used to show that the restriction of 
 $\kappa_1$ to $P\mathbb{P}{\cal H}(6)^{\rm odd}$
 vanishes as well. To this end we
 need a different argument as explained below. Assuming this result,
 we obtain a stratification of ${\cal M}_{4,{\rm odd}}$ into
 $3$ strata such that the restriction of $\kappa_1$ to each of these strata 
 vanishes. A stronger result is contained in \cite{FL08}. 
\end{example}

Our next goal is to show that the restriction of $\kappa_1$ to 
$P\mathbb{P}{\cal H}(2g-2)\subset {\cal M}_g$ vanishes.
This then implies that the restriction of $\kappa_1$
to $P\mathbb{P}{\cal H}(2g-2)^{\rm odd}\subset
{\cal M}_{g,{\rm odd}}$ vanishes.

This vanishing statement is certainly well known.
As we were not able to locate a precise statement in the
literature, we provide a proof
which also illustrates the use of the results in Section \ref{boundary}
for applications beyond the strict context of that section.

A zero of order $2g-2$ for an abelian differential
on a Riemann surface of genus $g$ is a Weierstrass point.
As a complex curve of genus $g$ has $(g-1)g(g+1)$ Weierstrass points counted with multiplicity,
this implies that the restriction
of the projection $P$ to $\mathbb{P}{\cal H}(2g-2)$ 
is a finite morphism onto its image.
Although by Theorem \ref{strata} the restrictions of 
$P^* \kappa_1$ and $\eta$ to $\mathbb{P}{\cal H}(2g-2)$
are positive multiples of each other,
this does not immediately imply that $\kappa_1=0$ on $P\mathbb{P}{\cal H}(2g-2)$
as $\mathbb{P}{\cal H}(2g-2)$
may be a twisted multisection over its projection, similar
to a section of a trivial surface bundle over a surface
obtained from a map of non-zero degree from the base onto the fiber
(see \cite{H12}).

Instead we directly apply the results of Section \ref{boundary} towards
our goal. Namely, define ${\cal O}=\mathbb{P}{\cal H}(1,2g-3)$.
By \cite{KtZ03}, the stratum is connected. Furthermore, 
it contains the entire stratum $\mathbb{P}{\cal H}(2g-2)$ in its boundary.
The dimension of ${\cal O}$ equals $2g$.

%Instead we proceed as follows. 
%Recall from \cite{KtZ03} that if 
%$g-1$ is even, then $\mathbb{P}{\cal H}(g-1,g-1)$ has
%three connected components. There is an odd non-hyperelliptic component
%$\mathbb{P}{\cal H}(g-1,g-1)^{\rm odd}$, the hyperelliptic component 
%$\mathbb{P}{\cal H}(g-1,g-1)^{\rm hyp}$ and an even component
%$\mathbb{P}{\cal H}(g-1,g-1)^{\rm even}$. If $g-1$ is odd then 
%$\mathbb{P}{\cal H}(g-1,g-1)$ is connected. 

%\begin{enumerate}
%\item For $g\equiv 1$  mod $4$ define
%${\cal O}=\mathbb{P}{\cal H}(g-1,g-1)^{\rm odd}$.
%\item For $g \equiv 3$ mod 4 define
%  ${\cal O}=\mathbb{P}{\cal H}(g-1,g-1)^{\rm even}$.
%\item For $g\equiv 0,2$ mod 4 define 
%${\cal O}=\mathbb{P}{\cal H}(g-1,g-1)$.
%\end{enumerate}

%The following is a consequence of \cite{KtZ03} and the fact that the spin structure of 
%${\cal H}(2g-2)^{\rm hyp}$ is odd if and only if $g\equiv 1,2$ mod $4$.

%\begin{lemma}\label{contains}
%\begin{enumerate}
%\item  $\overline{\cal O}={\cal Q}\cup 
%\mathbb{P}{\cal H}(2g-2)^{\rm odd}\cup \mathbb{P}{\cal H}(2g-2)^{\rm hyp}$
%if $g\equiv 1$ mod 4.
%\item  $\overline{\cal O}={\cal Q}\cup 
%\mathbb{P}{\cal H}(2g-2)^{\rm even}\cup \mathbb{P}{\cal H}(2g-2)^{\rm hyp}$
%if $g\equiv 3$ mod 4.
%\item If $g\equiv 0,2$ mod 4 then $\overline{\cal O}={\cal O}\cup \mathbb{P}{\cal H}(2g-2)$.
%\end{enumerate}
%\end{lemma}

The following is due to Gendron \cite{G18}. 

\begin{lemma}\label{projectspecial}
$P\overline{\cal O}\subset {\cal M}_{g}$ is a complex variety of dimension $2g$.
\end{lemma}

\begin{corollary}\label{multiple}
The restriction of $\kappa_1$ to $P\mathbb{P}{\cal H}(2g-2)$ 
vanishes.
\end{corollary}
\begin{proof}  
By Lemma \ref{projectspecial},  
$P\overline{\cal O}$ is a complex variety of dimension $2g$.
By Lemma \ref{project}, it contains two 
disjoint subvarieties ${\cal V}_1,{\cal V}_2$ of codimension one. Here ${\cal V}_1$ is 
the projection of the boundary components
$\mathbb{P}{\cal H}(2g-2)^{\rm odd}$, and ${\cal V}_2$
is the projection of $\mathbb{P}{\cal H}(2g-2)^{\rm hyp}$. 
Each of these varieties defines a dual cohomology class. 
By Proposition \ref{tauto} and Remark \ref{componentcomp}, 
each of the distinct boundary components of ${\cal O}$ defines the same linear
combination of the class $\kappa_1$ and $\eta$. 
As a consequence, each of the two components
${\cal V}_1,{\cal V}_2$ define the same multiple of $\kappa_1$ in 
$P\overline{\cal O}$. Since ${\cal V}_1$ and ${\cal V}_2$ are disjoint and 
being divisors, they
define a nontrivial cohomology class on $P\overline{\cal Q}$, and this class is 
a multiple of the restriction of $\kappa_1$. As ${\cal V}_1$ and ${\cal V}_2$ are disjoint,  
this implies as in the proof of Corollary \ref{multiple} 
that the restriction of $\kappa_1$ to ${\cal V}_1$ and ${\cal V}_2$ vanishes. 

Since in this argument, we may replace $\mathbb{P}{\cal H}(2g-2)^{\rm odd}$
by $\mathbb{P}{\cal H}(2g-2)^{\rm even}$, this implies the corollary. 
\end{proof}

Example \ref{genus4} and Corollary \ref{multiple} prove Theorem \ref{stratificationthm} for
$g=4$. Thus for the remainder of this article, we restrict to the case $g\geq 5$.

For the formulation of the following lemma, note that as strata are smooth complex 
suborbifolds
of ${\cal P}$, the intersection of the closure in ${\cal P}$ of a stratum 
with a fiber of 
$P:{\cal P}\to {\cal M}_{g,{\rm odd}}$ is a compact complex variety. 
We use Theorem \ref{strata} to show the following well known fact.

\begin{lemma}\label{completecurve}
Let $X\in {\cal M}_{g,{\rm odd}}^2$ and $j\geq 0$ be such that 
${\rm dim}({\cal Q}_j\cap P^{-1}(X))>0$. Then $X\in P(\cup_{\ell\geq j+1}{\cal Q}_{\ell})$. 
\end{lemma}
\begin{proof}
As both the fiber of ${\cal P}\to {\cal M}_{g,{\rm odd}}$ and the union of 
strata ${\cal Q}_j$ are smooth complex suborbifolds of ${\cal P}$, 
for each $X\in {\cal M}_{g,{\rm odd}}$ 
the intersection
${\cal Q}_j\cap P^{-1}(X)$ is a complex (possibly singular) variety. 
%As above, in order to avoid talking about stacks, we  pass to a finite
%manifold cover of ${\cal M}_{g,{\rm odd}}$
%and study the intersection for a point in the preimage
%of $X$. 

Let us assume that
this variety has a component $Y$ of positive dimension $k\geq 1$.
As $P^{-1}(X)$ is compact and $\cup_{\ell\geq j}{\cal Q}_\ell\subset 
{\cal P}$ is closed, either $X\in P(\cup_{\ell\geq j+1}{\cal Q}_{\ell})$ 
or $Y\subset {\cal Q}_j$ is compact.

In the second case, 
$Y$ is a complex subvariety
of $P^{-1}(X)\cap {\cal Q}_j$, and as $P^{-1}(X)$ is a complex projective space,
we conclude as in the proof of Lemma \ref{nocompletecurve} that
$\eta^k(Y)>0$. Namely, $\eta$ is the Chern class of the tautological bundle
over ${\cal P}$ whose restriction to a fiber is positive. 
On the other hand, $Y\subset P^{-1}(X)$ implies that
$P^*\kappa_1^k(Y)=0$. But this contradicts the fact that by Proposition \ref{degree2}, 
the restriction of  the class
$P^*\kappa_1$ to ${\cal Q}_j$ is a positive
multiple of $\eta$. The lemma is proven.
\end{proof}

The following statement illustrates the
main remaining step towards the proof of Theorem \ref{stratificationthm}.
Recall from Theorem \ref{clifford} the definition of the locus
${\cal M}_{g,{\rm odd}}^2$. Define 
\[{\cal Z}={\cal M}_{g,{\rm odd}}^2-
{\cal M}_{g,{\rm odd}}^4\] 
to be the locus of pairs $(X,L)$ with
$h^0(X,L)=3$.
%More precisely, we have ${\cal W}={\cal E}\cup (\overline{\cal Q}-{\cal Q})$.
%We next give some information on the divisor ${\cal E}$.

\begin{proposition}\label{etaidentify}
For $g\geq 5$ the preimage $P^{-1}({\cal M}_{g,{\rm odd}}^2)\cap 
\overline{\cal Q}={\cal E}$ is 
a divisor in $\overline{\cal Q}$ which is dual
to the restriction of the class 
$\eta$ to $H^2(\overline{\cal Q},\mathbb{Q})$.
\end{proposition}
\begin{proof} By Theorem \ref{clifford}, 
for $g\geq 5$
 the locus ${\cal M}_{g,{\rm odd}}^2$ is
  of pure codimension $3$, and for $r\geq 4$ the dimension of the
  locus ${\cal M}_{g,{\rm odd}}^r\subset {\cal M}_{g,{\rm odd}}$ 
  is at most $3g-2r-2$. Thus 
  ${\cal M}_{g,{\rm odd}}^2$ is the closure 
  of 
  ${\cal Z}={\cal M}_{g,{\rm odd}}^2-{\cal M}_{g,{\rm odd}}^4$.

  The dimension of the preimage of ${\cal M}_{g,{\rm odd}}^2$
  in $\overline{\cal Q}$  equals $3g-4$ and hence this preimage is a divisor 
  ${\cal E}$ in
  the closure $\overline{\cal Q}$ of
  ${\cal Q}=\mathbb{P}{\cal H}(2,\dots,2)^{\rm odd}$.
  We have to show that ${\cal E}$
  is dual to $\eta$.

By counting dimensions, 
the codimension of the locus $P^{-1}{\cal M}_{g,{\rm odd}}^4\cap 
\overline{\cal Q}$ is at least three.
As a consequence, it suffices to show that ${\cal E}- P^{-1}({\cal M}_{g,{\rm odd}}^4)$
is dual to $\eta$ in the sense of intersections in the
complex orbifold $\overline{\cal Q}-P^{-1}({\cal M}_{g,{\rm odd}}^4)$.
%Equivalently, it suffices to show that the restriction of the
%line bundle $\tau$ to $\overline{\cal Q}-P^{-1}({\cal M}_{g,{\rm odd}}^4)$
%coincides with the line bundle dual to 
%the divisor ${\cal E}- P^{-1}({\cal M}_{g,{\rm odd}}^4)$.

If $(X,L)\in {\cal Z}={\cal M}_{g,{\rm odd}}^2-{\cal M}_{g,{\rm odd}}^4$ 
then $h^0(X,L)=3$. Thus as ${\cal Z}$
is a (non-closed)  
complex subvariety  of ${\cal M}_{g,{\rm odd}}$ of complex codimension 3,
the restriction of the projection
$P$ to $P^{-1}({\cal Z})\cap \overline{\cal Q}$ defines on 
$P^{-1}({\cal Z})\cap \overline{\cal Q}$ the structure of a
$\mathbb{C}P^2$-bundle over  ${\cal Z}$. For each point $x\in {\cal Z}$, 
the fiber of this bundle is a projective subplane of the fiber of 
${\cal P}$. In particular, the restriction of the fiberwise tautological 
line bundle for ${\cal P}$ to this projective plane coincides with the tautological
line bundle of this plane.  

Since the restriction of $P$ to
$\overline{\cal Q}-P^{-1}{\cal M}_{g,{\rm odd}}^2$ is a biholomorphism, 
$P^{-1}({\cal M}_{g,{\rm odd}}-{\cal M}_{g,{\rm odd}}^4)\cap \overline{\cal Q}$ 
is biholomorphic to 
the blow-up of the codimension three subvariety 
${\cal Z}$ in ${\cal M}_{g,{\rm odd}}-{\cal M}_{g,{\rm odd}}^4$
by uniqueness of blow-ups as explained on p.604 of \cite{GH78}. 
The normal bundle of the blow-up of ${\cal Z}$ 
is equivalent to the fiberwise tautological bundle 
over the blow-up fibers. By the discussion in the previous paragraph, this 
bundle is just the restriction of the line bundle $\tau$
over ${\cal P}$. By naturality
of Chern classes under pull-back by inclusions, the restriction to
$P^{-1}({\cal Z})$ of the Chern class
of this normal bundle equals the restriction of $\eta$.

Let $\pi$ be the restriction of the projection $P$ to
$\overline{\cal Q}$. 
The rational cohomology of the blow-up of ${\cal M}_{g,{\rm odd}}-
{\cal M}_{g,{\rm odd}}^4$ along ${\cal Z}$
equals
\[\pi^*H^*({\cal M}_{g,{\rm odd}}-{\cal M}_{g,{\rm odd}}^4,\mathbb{Q})
  \oplus H^*(\pi^{-1}({\cal Z}),\mathbb{Q})/\pi^*H^*({\cal Z},\mathbb{Q})\]
(see p.605 of \cite{GH78}), and the
cohomology $H^*(\pi^{-1}({\cal Z}),\mathbb{Q})$ is the cohomology of
a $\mathbb{C}P^2$-bundle over ${\cal Z}$ whose cohomology ring
is a quotient of $H^*({\cal Z},\mathbb{Q})[\hat \eta]$
where $\hat \eta$ is the restriction of $\eta$ (p.406 of \cite{GH78}).
Since $H^2({\cal M}_{g,{\rm odd}},\mathbb{Q})$
is spanned by $\kappa_1$, this yields that the class
defined by the divisor $\pi^{-1}({\cal Z})$ is of the form
$\eta\vert \overline{\cal Q}$.
\end{proof}

%As $\kappa_1$ vanishes on ${\cal Y}$ we deduce 

%\begin{corollary}\label{doesnot}
%The divisor ${\cal Y}$ does not contain any complete variety. 
%\end{corollary}
Define
\[{\cal Y}=P(\overline{\cal Q}-{\cal Q})\subset {\cal M}_{g,{\rm odd}}.\] 
For reasons of dimension,
${\cal Y}$ is a divisor in ${\cal M}_{g,{\rm odd}}$ which contains 
${\cal Z}$ as a subvariety of codimension two
by Lemma \ref{completecurve}.
Thus by Proposition \ref{etaidentify},  
${\cal W}=P^{-1}({\cal Y})\cap \overline{\cal Q}$ is
a divisor in $\overline{\cal Q}$ containing the closure 
${\cal E}$ of $P^{-1}({\cal M}_{g,{\rm odd}}^2)\cap \overline{\cal Q}$ as an irreducible 
component.

As $H^2({\cal M}_{g,{\rm odd}},\mathbb{Q})$
is generated by the class $\kappa_1$, 
the divisor  ${\cal Y}$ is dual to a
line bundle $\xi$ whose Chern class $c_1(\xi)$ is a multiple of $\kappa_1$.
The line bundle $\xi$ is trivial on
${\cal M}_{g,{\rm odd}}-{\cal Y}$, and its pull-back to
$\overline{\cal Q}$ is trivial on
$\overline{\cal Q}-P^{-1}{\cal Y}
\subset {\cal Q}$. By naturality of the duality between
divisors and line bundles
under birational maps, we conclude that
$P^{-1}{\cal Y}$ is the defining divisor for
the pull-back of $\xi$ to $\overline{\cal Q}$.
In other words, $P^*\kappa_1$ vanishes on
$\overline{\cal Q}-P^{-1}{\cal Y}$.

Recall that $P^{-1}{\cal Y}={\cal E}\cup (\overline{\cal Q}-{\cal Q})$
is reducible, and the irreducible component ${\cal E}$ dual to
$\eta$  
may intersect ${\cal Q}$ non-trivially. As a consequence, the
cohomology class in $H^2(\overline{\cal Q},\mathbb{Q})$ defined by 
the irreducible component $\overline{\cal Q}-{\cal Q}$
(which is the closure of the connected stratum
$\mathbb{P}{\cal H}(2,\dots,2,4)^{\rm odd}$) 
is a rational linear combination of the classes
$\eta$ and $P^*\kappa_1$ as was shown in Proposition \ref{tauto}.

Recall that we denoted by ${\cal Q}_j$ the union of the components of strata
in $\overline{\cal Q}$ of codimension $j-1$. We are now ready to 
complete the main step in the proof of Theorem \ref{stratificationthm} from the
introduction. Note
that by a result of Diaz \cite{D84}, the maximal dimension of a complete
subvariety of ${\cal M}_g$ and hence of ${\cal M}_{g,{\rm odd}}$ is not bigger than $g-2$.
We do not have information on a sharp bound.

\begin{proposition}\label{almostaffine}
For $k\leq g-1$ define ${\cal D}_k=P({\cal Q}_{k})-\overline{P({\cal Q}_{k+1})}$; then 
for all $k$, the restriction of $\kappa_1$ to the locus
${\cal D}_k$ vanishes. As a consequence, ${\cal D}_k$ 
does not contain a complete variety of positive dimension, and 
${\cal M}_{g,{\rm odd}}-\cup_{j\geq k+1}{\cal D}_j$ does not contain a complete variety of 
dimension at least $k$.
\end{proposition}
\begin{proof} By Corollary \ref{multiple}, it suffices to show the proposition in the case
$k\leq g-2$. Thus 
let $k\leq g-2$ and let 
${\cal A}$ be a component of ${\cal Q}_k$. This is a component of a stratum 
of abelian differentials with all zeros a multiple of 2 and odd spin structure. 
We have to show that 
the restriction of $\kappa_1$ to 
$P{\cal A}-\overline{P{\cal Q}_{k+1}}$ is trivial.
By Lemma \ref{nocompletecurve}, this then implies that 
$P{\cal Q}_k-\overline{P{\cal Q}_{k+1}}$ does not contain
a complete variety of positive dimension.

As the projection $P$ is closed, $P\overline {\cal A}$ 
 is a closed subvariety of ${\cal M}_{g,{\rm odd}}$.
Define ${\cal R}$ to be the closure of the 
set $\{z\in P\overline{\cal A}\mid {\rm dim}(P^{-1}(z)\cap \overline{\cal A})>0\}$.
Then ${\cal R}$ is a closed subvariety of $P\overline{\cal A}$ which is contained in 
$\overline{P{\cal Q}_{k+1}}$ by Lemma \ref{completecurve}. 
Since the restriction of $P$ to each component of a stratum in $\overline{\cal Q}$ 
is generically finite-to-one \cite{G18}, 
its codimension in $P\overline{\cal A}$ is at least one (in fact, ${\cal R}$  may
be empty). Furthermore, the preimage $\hat {\cal R}$ of ${\cal R}$ 
in $\overline{\cal A}$ is of codimension at least one as well. 

By naturality of pull-backs under birational maps, 
the pull-back by $P\vert \overline{\cal A}$ of the cohomology class dual to the divisor 
$P(\overline{\cal A}\cap \overline{\cal Q}_{k+1})$ 
is the cohomology class dual to $\hat {\cal R}\cup (\overline{\cal A}\cap 
\overline{\cal Q}_{k+1})$. 

Consider first the case that the codimension of $\hat {\cal R}$ is at least two.
Then 
this class coincides with the class 
defined by the divisor $\overline{\cal A}\cap \overline{\cal Q}_{k+1}$ in $\overline{\cal A}$.
By Proposition \ref{tauto}, this class is a linear combination of 
$P^*\kappa_1$ and $\eta$. By naturality under pull-back, we conclude that
$P(\overline{\cal A}\cap \overline{\cal Q}_{k+1})$ defines a multiple of the
restriction of $\kappa_1$. As a consequence, the restriction of 
$\kappa_1$ to 
$P\overline{\cal A}-P\overline{\cal A}\cap \overline{\cal Q}_{k+1}$
vanishes as claimed in the proposition.

If the codimension of $\hat {\cal R}$ equals one, then 
$P^{-1}(P(\overline{\cal A}\cap  \overline{\cal Q}_{k+1}))=
\hat {\cal R}\cup (\overline{\cal A}\cap \overline{\cal Q}_{k+1})$ is 
reducible, and it defines the pull-back of the class of 
$P(\overline{\cal A}\cap \overline{\cal Q}_{k+1})\subset P\overline{\cal A}$. 
By Proposition \ref{tauto}, the 
class of $\overline{\cal A}\cap \overline{\cal Q}_{k+1}$ is a rational linear combination of 
$P^*\kappa_1$ and $\eta$. On the other hand, we know that 
the class of the divisor $\hat {\cal R}$ is not the pull-back of a second cohomology
class on $P\overline{\cal A}$. 
As in the proof 
of Proposition \ref{etaidentify}, in this case we deduce that it defines a multiple of the 
fiber class $\eta$, and the divisor
$P(\overline{\cal A}\cap \overline{\cal Q}_{k+1})$ in $P\overline{\cal A}$  
defines a multiple of $\kappa_1$.
Together with 
Corollary \ref{multiple}, 
this completes the first part of the proposition.

To show the second part of the proposition, let $V\subset {\cal M}_{g,{\rm odd}}$ be a complete
variety of dimension $k\geq 1$ and assume that $V\subset {\cal M}_{g,{\rm odd}}-
{\cal D}_k$. As ${\cal M}_{g,{\rm odd}}-{\cal D}_2$ does not contain a complete variety,
the variety $V$ has to intersect ${\cal D}_2$ nontrivially. Since 
${\cal D}_2\subset {\cal M}_{g,{\rm odd}}$ is a closed subvariety of codimension one, 
this intersection is a complete variety $V_2$ whose dimension is at least $k-1$. 

Repeat this reasoning with $V_2\subset {\cal D}_2$ and the subvariety ${\cal D}_3$. 
In finitely many such steps we conclude that if 
$V\subset {\cal M}_{g,{\rm odd}}-{\cal D}_{k+1}$ 
has dimension $k$, then $V\cap {\cal D}_{k}$ is a complete variety of dimension at least 
one which is disjoint from ${\cal D}_{k+1}$. By the above, this is impossible. This completes
the proof of the proposition.
\end{proof}

\bigskip
\noindent
MATHEMATISCHES INSTITUT DER UNIVERSIT\"AT BONN\\
ENDENICHER ALLEE 60\\ 
53115 BONN, GERMANY

\bigskip
\noindent 
e-mail: ursula@math.uni-bonn.de
\end{document}